\newtheorem{thm}{Theorem}[section]
\newtheorem{lem}[thm]{Lemma}
\newtheorem{prop}[thm]{Proposition}
\newtheorem{Thm}{Theorem}
\newtheorem{Cor}{Corollary}
\newtheorem{Pro}{Proposition}
\numberwithin{equation}{section}
\def\M{\mathcal M}
\newcommand{\pr}{\text{Prob}}
\newcommand{\ex}{\mathbb{E}}
\newcommand{\norm}[1]{\left|\hspace*{2.5pt} \!\!\left|
#1\right|\hspace*{2.5pt} \!\!\right|}
\begin{document}
\baselineskip=17pt
\title{Randomness of character sums modulo $m$}

\author{Youness Lamzouri}
\author{Alexandru Zaharescu}

\address{Department of Mathematics \\
University of Illinois at Urbana-Champaign \\
273 Altgeld Hall, MC-382 \\
1409 W. Green Street \\
Urbana, Illinois 61801, USA}
\email{lamzouri@math.uiuc.edu}
\address{Department of Mathematics \\
University of Illinois at Urbana-Champaign \\
273 Altgeld Hall, MC-382 \\
1409 W. Green Street \\
Urbana, Illinois 61801, USA}
\email{Zaharesu@math.uiuc.edu}

\date{}

\begin{abstract} Using a probabilistic model, based on random walks on the additive group $\mathbb{Z}/m\mathbb{Z}$, we prove that the values of certain real character sums are uniformly distributed in residue classes modulo $m$.
\end{abstract}

\subjclass[2010]{Primary 11L40; Secondary 11B50, 60G50}

\keywords{Character sums, distribution in residue classes, random walks on finite groups.}

\thanks{The First author is supported by a postdoctoral fellowship from the Natural
Sciences and Engineering Research Council of Canada. Research of the second author is supported by the NSF grant DMS-0901621.}

\maketitle

\section{Introduction}

\noindent A central question in number theory is to gain an understanding of character sums
 $$S_{\chi}(x)=\sum_{n\leq x} \chi(n),$$
 where $\chi$ is a Dirichlet character modulo $q$. When $q=p$ is a prime number and $\chi_p= \left(\frac{\cdot}{p}\right)$ is the Legendre symbol modulo $p$, the character sums $S_p(x)=S_{\chi_p}(x)$ encode information on the distribution of quadratic residues and non-residues modulo $p$ (see for example Davenport and Erd\"os \cite{DaEr}, and Peralta \cite{Pe}). In particular, bounds for the order of magnitude of $S_{p}(x)$ lead to results on the size of the least quadratic non-residue modulo $p$ (see the work of Ankeny \cite{An};  Banks, Garaev, Heath-Brown and Shparlinski \cite{BGHS}; Burgess \cite{Bu}; Graham and Ringrose \cite{GrRi}; Lau and Wu \cite{LaWu}; Linnik \cite{Li}; and Montgomery \cite{Mo}).

Quadratic residues and non-residues appear to occur in a rather random pattern modulo $p$, which suggests that the values of $\chi_p(n)$ mimic a random variable that takes the values $1$ and $-1$ with equal probability $1/2$. This fact was recently exploited by  Granville and  Soundararajan \cite{GrSo} while investigating the distribution of the values of Dirichlet $L$-functions attached to quadratic characters at $s=1$.  Furthermore, a result of Davenport and Erd\"os \cite{DaEr} shows that short real character sums are indeed random in some sense. More specifically, they established that the values $S_{p}(n+H)-S_{p}(n)$ are distributed according to a Gaussian distribution of mean zero and variance $H$ as $H\to\infty$ in the range $\log H/\log p\to 0$ when $p\to\infty$.

In this paper, we investigate a new aspect of the \emph{randomness} of these character sums. To describe our results, we first need some notation. Let $F(X)$ be a square-free polynomial of degree $d_F\geq 1$ over the finite field $\mathbb{F}_p= \mathbb{Z}/p\mathbb{Z},$ and define
$$
  S_p(F,k):= \sum_{n\leq k} \chi_p(F(n)),
$$
for all positive integers $k\leq p$.  Moreover, let $\Phi_p(F;m,a)$ be the proportion of positive integers $k\leq p$ for which $S_p(F,k)\equiv a \bmod m$; that is
$$ \Phi_p(F;m,a)=\frac{1}{p}|\{k\leq p: S_p(F,k)\equiv a \text{ mod } m\}|.$$
Since the values $\chi_p(F(n))$ are expected to be randomly distributed, one might guess that $\Phi_p(F;m,a)\sim 1/m$ for all $a\bmod m$ as $p\to\infty$. We show that this is indeed the case in Corollary 1 below, uniformly for all $m$ in the range $m=o((\log p)^{1/4})$ as $p\to \infty$. Our strategy is to introduce a probabilistic model for the values $S_p(F,k)$ based on random walks. A simple random walk on $\mathbb{Z}$ is a stochastic process $\{S_k\}_{k\geq 1}$ where
$$ S_k= X_1+\cdots + X_k,$$
and $\{X_j\}_{j\geq 1}$ is a sequence of independent random variables taking the values $1$ and $-1$ with equal probability $1/2$ (for further reference see Spitzer \cite{Sp}). We shall model the values $S_p(F,k) \bmod m$ by the stochastic process $\{S_k\bmod m\}$ which may be regarded as a simple random walk on the additive group $\mathbb{Z}/{m\mathbb{Z}}$. To this end we consider the random variable
 $$\Phi_{\text{rand}}(N;m,a):=\frac{1}{N}|\{k\leq N: S_k\equiv a\bmod m\}|.$$
Here and throughout  ${\Bbb E}(Y)$ will denote the expectation of the random variable $Y$.
We first study the probabilistic model and prove
\begin{Pro} Let $m\geq 2$ be a positive integer. Then, for all $N\geq m^2$ we have
 $$ \sum_{a=0}^{m-1}\ex\left(\left(\Phi_{\textup{rand}}(N;m,a)-\frac{1}{m}\right)^2\right)
\ll \frac{m^2}{N}.$$
\end{Pro}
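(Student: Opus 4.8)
The plan is to expand the square, perform the sum over $a$ first, and use orthogonality to reduce matters to the return probabilities of the walk. Writing $\Phi_{\textup{rand}}(N;m,a)-\frac1m=\frac1N\sum_{k=1}^N\bigl(\mathbf 1_{S_k\equiv a\,(m)}-\frac1m\bigr)$, squaring and summing over $a$,
$$\sum_{a=0}^{m-1}\ex\Bigl(\bigl(\Phi_{\textup{rand}}(N;m,a)-\tfrac1m\bigr)^2\Bigr)=\frac1{N^2}\sum_{k=1}^N\sum_{l=1}^N\sum_{a=0}^{m-1}\ex\Bigl[\bigl(\mathbf 1_{S_k\equiv a}-\tfrac1m\bigr)\bigl(\mathbf 1_{S_l\equiv a}-\tfrac1m\bigr)\Bigr].$$
For fixed $k,l$ the inner sum over $a$ collapses: $\sum_a\mathbf 1_{S_k\equiv a}\mathbf 1_{S_l\equiv a}=\mathbf 1_{S_k\equiv S_l\,(m)}$ and $\sum_a\mathbf 1_{S_k\equiv a}=1$, so it equals $\mathbf 1_{S_k\equiv S_l\,(m)}-\frac1m$. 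Hence
$$\sum_{a=0}^{m-1}\ex\Bigl(\bigl(\Phi_{\textup{rand}}(N;m,a)-\tfrac1m\bigr)^2\Bigr)=\frac1{N^2}\sum_{k,l=1}^N\Bigl(\pr\bigl(S_k\equiv S_l\bmod m\bigr)-\frac1m\Bigr).$$
The $N$ diagonal terms $k=l$ contribute $\tfrac1N\bigl(1-\tfrac1m\bigr)\le\tfrac1N$, which is $\ll m^2/N$. (If $m^2>N$ the asserted bound already exceeds the trivial bound $\le1$, so one may assume $m^2\le N$.)

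For the off-diagonal terms, observe that $S_l-S_k$ has the same distribution as $T_{|l-k|}:=X_1+\cdots+X_{|l-k|}$, and since congruence $\bmod\ m$ is translation-invariant, $\pr(S_k\equiv S_l\bmod m)=\pr(T_{|l-k|}\equiv0\bmod m)$. Grouping ordered pairs by $j=|l-k|$ (each $1\le j\le N-1$ coming from $2(N-j)$ pairs), the off-diagonal part is $\frac{2}{N^2}\sum_{j=1}^{N-1}(N-j)\bigl(\pr(T_j\equiv0\bmod m)-\frac1m\bigr)$. I would then expand the indicator of $\{T_j\equiv0\bmod m\}$ in the additive characters of $\mathbb Z/m\mathbb Z$; since $\ex\bigl[e^{2\pi i t X_1/m}\bigr]=\cos(2\pi t/m)$, independence of the $X_i$ gives
$$\pr(T_j\equiv0\bmod m)-\frac1m=\frac1m\sum_{t=1}^{m-1}\Bigl(\cos\frac{2\pi t}{m}\Bigr)^j.$$

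It remains to bound $\frac{2}{mN^2}\sum_{t=1}^{m-1}\sum_{j=1}^{N-1}(N-j)\cos^j(2\pi t/m)$, and this is where the real work lies. After interchanging the sums and using $N-j\le N$, for each $t$ with $\cos(2\pi t/m)\ne-1$ one sums a geometric series, $\sum_{j\ge1}\bigl|\cos(2\pi t/m)\bigr|^j\le 1/(1-|\cos(2\pi t/m)|)$; the one exceptional value $t=m/2$ (occurring only for even $m$, where $\cos=-1$) is handled by Abel summation, $\bigl|\sum_{j=1}^{N-1}(N-j)(-1)^j\bigr|\ll N$. The crucial point is the uniform gap estimate
$$1-\bigl|\cos(2\pi t/m)\bigr|\ \ge\ \tfrac12\sin^2(2\pi t/m)\ \gg\ \frac1{m^2}\qquad(1\le t\le m-1,\ t\ne m/2),$$
which follows from $1-|x|\ge\tfrac12(1-x^2)$, from $|\sin\pi x|\ge2\|x\|$, and from the fact that $2t/m$ lies at distance at least $1/m$ from the nearest integer for these $t$; some care is needed precisely at the values of $t$ nearest $0$, $m/2$ and $m$, where the gap is genuinely of order $1/m^2$. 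Feeding this in, $\sum_{t\ne m/2}1/(1-|\cos(2\pi t/m)|)\ll m\cdot m^2=m^3$, so the off-diagonal contribution is $\ll\frac1{mN^2}\bigl(N\cdot m^3+N\bigr)\ll\frac{m^2}{N}$, which together with the diagonal bound finishes the proof. The main obstacle is this last estimate: everything rests on keeping the eigenvalues $\cos(2\pi t/m)$ away from $\pm1$ while correctly isolating the degenerate eigenvalue $-1$.
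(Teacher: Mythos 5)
Your proof is correct and is essentially the paper's own argument: after summing over $a$ you arrive at exactly the same quantity $\frac1N+\frac{2}{mN^2}\sum_{t=1}^{m-1}\sum_{j=1}^{N-1}(N-j)\cos^j(2\pi t/m)$ as in the paper (your collapse to return probabilities $\pr(T_j\equiv 0\bmod m)$ is just a repackaging of their orthogonality computation leading to (3.5)), and your two key estimates --- the spectral gap $1-|\cos(2\pi t/m)|\gg m^{-2}$ for $t\neq m/2$ and the $O(N)$ cancellation in the alternating sum at the eigenvalue $-1$ --- are precisely the content of their Lemmas 3.1 and 3.2.
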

Appealing to Markov's inequality, we deduce from this result that $$\Phi_{\textup{rand}}(N;m,a)=\frac{1}{m}(1+o(1))$$ with probability $1-o(1)$ provided that $N/m^2\to\infty.$

Using Proposition 1, we establish an analogous estimate for the second moment of the difference $\Phi_p(F;m,a)-1/m$ (which may be regarded as the ``variance'' of $\Phi_p(F;m,a)$).
\begin{Thm} Let $p$ be a large prime number and $F(X)\in \mathbb{F}_p(X)$ be a square-free polynomial of degree $d_F\geq 1$. Then, for any integer $2\leq m\ll (\log p)^{1/4}$ we have
$$ \sum_{a=0}^{m-1}\left(\Phi_p(F;m,a)-\frac{1}{m}\right)^2
\ll_{d_F} \frac{m^2}{\log p}.
$$
\end{Thm}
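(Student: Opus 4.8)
The plan is to compare the ``character walk'' $k\mapsto S_p(F,k)$ with a simple random walk \emph{blockwise}, so that Proposition~1 applies to each block. Write $e_m(u):=e^{2\pi i u/m}$, $\theta_j:=2\pi j/m$, and fix the block length $N:=\lfloor\tfrac12\log p\rfloor$; note $1\le N<p$ and $N\ge m^2$ for $p$ large in the stated range of $m$. Writing $k=(t-1)N+s$ with $1\le t\le\lfloor p/N\rfloor$ and $1\le s\le N$ (the at most $N$ leftover values of $k$ giving a negligible contribution), put $\beta_t:=S_p(F,(t-1)N)$ and $W_{t,s}:=\sum_{(t-1)N<n\le(t-1)N+s}\chi_p(F(n))$, so $S_p(F,k)\equiv a\pmod m$ iff $W_{t,s}\equiv a-\beta_t\pmod m$. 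With $\Psi_t(b):=\tfrac1N|\{s\le N:\,W_{t,s}\equiv b\bmod m\}|$ one has $\Phi_p(F;m,a)-\tfrac1m=\tfrac Np\sum_t\big(\Psi_t(a-\beta_t)-\tfrac1m\big)$, and applying Cauchy--Schwarz first in $a$ (the $\ell^2$-norm over $\mathbb{Z}/m\mathbb{Z}$ being translation invariant) and then in $t$ gives
\[
 \sum_{a=0}^{m-1}\Big(\Phi_p(F;m,a)-\tfrac1m\Big)^2\ \le\ \frac Np\sum_{t\le p/N}D_t,\qquad D_t:=\sum_{b=0}^{m-1}\Big(\Psi_t(b)-\tfrac1m\Big)^2 .
\]
By orthogonality of additive characters mod $m$, $D_t=\tfrac1m\sum_{j=1}^{m-1}\big|\tfrac1N\sum_{s\le N}e_m(jW_{t,s})\big|^2$, whence
\[
 \frac Np\sum_{t}D_t=\frac{1}{pNm}\sum_{j=1}^{m-1}\ \sum_{s_1,s_2=1}^{N}\ \sum_{t\le p/N}e_m\!\big(j(W_{t,s_1}-W_{t,s_2})\big).
\]

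Now fix $j$ and $s_1>s_2$, put $h:=s_1-s_2$ (the case $s_1<s_2$ is conjugate, $s_1=s_2$ trivial). Since $\chi_p(F(n))\in\{-1,0,1\}$ and $e^{i\theta_j c}=\cos\theta_j+ic\sin\theta_j$ for $c=\pm1$,
\[
 e_m\!\big(j(W_{t,s_1}-W_{t,s_2})\big)=\!\!\prod_{\substack{(t-1)N+s_2<n\le(t-1)N+s_1\\ p\nmid F(n)}}\!\!\big(\cos\theta_j+i\chi_p(F(n))\sin\theta_j\big),
\]
and expanding this product over subsets, the empty subset contributes $(\cos\theta_j)^h$ for each of the $\ge\lfloor p/N\rfloor-d_F$ blocks containing no zero of $F$. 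Summing over $t$, these empty-subset terms reproduce $\tfrac{1}{N^2m}\sum_j\sum_{s_1,s_2}(\cos\theta_j)^{|s_1-s_2|}+O_{d_F}(N/p)$, and since $\ex\,e_m(j(S_{s_1}-S_{s_2}))=(\cos\theta_j)^{|s_1-s_2|}$ the main quantity is exactly $\sum_{a}\ex\big((\Phi_{\text{rand}}(N;m,a)-\tfrac1m)^2\big)$, which by Proposition~1 is $\ll m^2/N\ll m^2/\log p$. Each nonempty subset $L\subseteq\{1,\dots,h\}$ contributes $(\cos\theta_j)^{h-|L|}(i\sin\theta_j)^{|L|}$ (of modulus $\le1$, up to at most $d_F$ exceptional blocks) times the incomplete character sum $\sum_{t\le p/N}\chi_p\big(G_{L,s_2}(t)\big)$, where $G_{L,s_2}(Y):=\prod_{l\in L}F\big((Y-1)N+s_2+l\big)$ has degree $|L|d_F$ in $Y$.

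The crux is to estimate these error sums, and here I would prove the lemma that \emph{for $p$ large in terms of $d_F$ and for any nonempty $L\subseteq\{1,\dots,N\}$ and any $s_2$, the polynomial $G_{L,s_2}$ is not a constant times a perfect square}. After the linear substitution sending $\max(s_2+L)$ to $0$, squareness of $\prod_l F(\,\cdot-\gamma_l)$, with shifts $\gamma_l\in\{0,\dots,N-1\}$, is equivalent to $\mathbf{1}_R\ast\mathbf{1}_\Gamma\equiv0\pmod2$, where $R$ is the root set of $F$ and $\Gamma=\{\gamma_l\}$; viewed in the group algebra $\mathbb{F}_2[\mathbb{F}_{p^k}]$ ($k$ the lcm of the degrees of the irreducible factors of $F$), the factor $\mathbf{1}_\Gamma$ lies in the prime-field direction and has degree $<N<\mathrm{ord}_p(2)$ there, hence is coprime to $1+x+\dots+x^{p-1}$, which forces $\mathbf{1}_R$ to be divisible by $1+x+\dots+x^{p-1}$ — impossible since $F$ has only $d_F<p$ roots. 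Granting the lemma, Weil's bound together with completion of the sum gives $\big|\sum_{t\le p/N}\chi_p(G_{L,s_2}(t))\big|\ll_{d_F}N\sqrt p\,\log p$ uniformly in $L$. Since there are $m$ choices of $j$, at most $N^2$ pairs $(s_1,s_2)$, and at most $2^h\le2^N$ subsets $L$ (and the $O(d_F)$ contributions of exceptional blocks are likewise absorbed), the total error contribution to $\tfrac Np\sum_tD_t$ is
\[
 \ll_{d_F}\ \frac{1}{pNm}\cdot m\cdot N^2\cdot 2^{N}\cdot N\sqrt p\,\log p\ =\ \frac{N^{2}\,2^{N}\,\log p}{\sqrt p}\ =\ o(1),
\]
because $2^{N}\le p^{(\log 2)/2}$. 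Combining with the previous paragraph, $\sum_a(\Phi_p(F;m,a)-\tfrac1m)^2\le\tfrac Np\sum_tD_t\ll_{d_F}m^2/\log p$.

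I expect the non-squareness lemma — together with the calibration of $N$, so that the $2^N$ from the subset expansion still loses to the $p^{-1/2}$ from Weil while keeping $N\ge m^2$ for Proposition~1 — to be the main obstacle; the remaining points (the leftover block, the $O(d_F)$ terms from zeros of $F$, and, if one prefers to track the coefficients $(\cos\theta_j)^{h-|L|}(i\sin\theta_j)^{|L|}$ rather than bound them by $1$, the elementary estimate $\sum_{j=1}^{m-1}|\sin\theta_j|^{\ell}\ll_\ell m$) are routine.
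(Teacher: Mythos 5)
Your proposal is correct, and it reproduces the skeleton of the paper's argument: partition $\{1,\dots,p\}$ into blocks of length $\asymp\log p$, use translation invariance in $a$ together with Cauchy--Schwarz to reduce the global variance to a sum of per-block variances, and identify each per-block variance with the random-walk quantity of Proposition~1 up to errors controlled by Weil's bound for incomplete sums of $\chi_p$ of products of shifts of $F$ along an arithmetic progression. There are two genuine differences. First, your bookkeeping is on the Fourier side: you expand each factor $e_m(j\chi_p(F(n)))=\cos(2\pi j/m)+i\chi_p(F(n))\sin(2\pi j/m)$ and separate the empty subset (the main term, matching $(\cos(2\pi j/m))^{|s_1-s_2|}$ from the model) from the nonempty subsets (character-sum errors); the paper instead counts, for each sign pattern ${\bf v}\in\{-1,1\}^L$, the number of blocks realizing it (its Proposition~4.1) and then imports the random-walk identity wholesale. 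These are equivalent up to reorganization, reduce to the same family of character sums, and use the same calibration of the block length so that the $2^{N}$ terms of the expansion lose to the $p^{-1/2}$ saving from Weil. Second --- and this is the substantive divergence --- both treatments hinge on the non-squareness of $\prod_{l}F(X-\gamma_l)$, and your proof of that is genuinely different: the paper's Lemma~2.2 is a Minkowski lattice-point/pigeonhole argument valid for \emph{arbitrary} distinct shifts $b_1,\dots,b_L\in\mathbb{F}_p$ provided $(4d_F)^L<p$, whereas you exploit that your shifts are small nonnegative integers. Your reduction (squareness is equivalent to $\mathbf{1}_R\ast\mathbf{1}_\Gamma\equiv0\bmod 2$; the polynomial $\sum_{\gamma\in\Gamma}x^\gamma$ is nonzero of degree $<N<\mathrm{ord}_p(2)$, since $\mathrm{ord}_p(2)\ge\log_2 p>\tfrac12\log p$, hence coprime to $1+x+\cdots+x^{p-1}$ over $\mathbb{F}_2$; so $\mathbf{1}_R$ would have to be constant on $\mathbb{F}_p$-cosets, contradicting $|R|=d_F<p$) is sound --- the only factor of $x^p-1$ that $\sum_\gamma x^\gamma$ may share is $x-1$, which corresponds to characters trivial on $\mathbb{F}_p$ and is harmless. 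Your lemma admits up to roughly $\log_2 p$ shifts independently of $d_F$ but requires them to lie in a short integer interval; the paper's allows arbitrary shifts but only $\log p/\log(4d_F)$ of them. Either input suffices for the theorem, and the remaining details you flag (leftover block, zeros of $F$, the reduction to the squarefree kernel before applying Weil) are indeed routine.
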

As a consequence, we obtain
\begin{Cor} Under the same assumptions of Theorem 1, we have uniformly for all $0\leq a\leq m-1$
$$\Phi_p(F;m,a)= \frac{1}{m}+ O_{d_F}\left(\frac{m}{\sqrt{\log p}}\right).$$
\end{Cor}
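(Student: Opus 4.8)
The plan is to obtain Corollary~1 directly from Theorem~1 by the elementary principle that a single non-negative summand is bounded by the whole sum. First I would fix an integer $2\le m\ll(\log p)^{1/4}$ and a residue class $a$ with $0\le a\le m-1$. Since every term in the sum appearing in Theorem~1 is non-negative, we have
$$\left(\Phi_p(F;m,a)-\frac{1}{m}\right)^2\le \sum_{b=0}^{m-1}\left(\Phi_p(F;m,b)-\frac{1}{m}\right)^2\ll_{d_F}\frac{m^2}{\log p}.$$
Taking square roots then gives $\bigl|\Phi_p(F;m,a)-\tfrac1m\bigr|\ll_{d_F} m/\sqrt{\log p}$, which is exactly the asserted estimate.

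The one point worth making explicit is the uniformity in $a$: the implied constant in Theorem~1 depends only on $d_F$ (not on $m$, $p$, or the residue class), and the bound just derived for $\bigl|\Phi_p(F;m,a)-\tfrac1m\bigr|$ is the same for every $a$. Hence no averaging or pigeonhole refinement is needed, and the conclusion holds uniformly over $0\le a\le m-1$. I would also note, to connect back with the equidistribution claim of the introduction, that in the admissible range $m\ll(\log p)^{1/4}$ the error term $m/\sqrt{\log p}$ is $o(1/m)$, so indeed $\Phi_p(F;m,a)=\tfrac1m(1+o(1))$ for every $a$ as $p\to\infty$.

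I do not expect any genuine obstacle in this deduction; all of the substance lies in Theorem~1, and Corollary~1 is simply its routine ``pointwise-from-$L^2$'' consequence. The only care required is to observe that Theorem~1 already sums over all residue classes, so the per-class bound follows with no further work.
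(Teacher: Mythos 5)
Your deduction is correct and is exactly the (implicit) argument the paper intends: Corollary 1 is stated as an immediate consequence of Theorem 1, obtained by bounding a single non-negative summand by the full sum and taking square roots. No differences to report.
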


Let $R_p(F,k)$ be the number of positive integers $n\leq k$ such that $F(n)$ is a quadratic residue modulo $p$, and similarly denote by $N_p(F,k)$ the number of $n\leq k$ for which $F(n)$ is a quadratic non-residue mod $p$.
Using a slight variation of our method we also prove that the values $R_p(F,k)$ (and $N_p(F,k)$) are uniformly distributed in residue classes modulo $m$. In this case, the corresponding probabilistic model involves random walks on the non-negative integers, where each step is $0$ or $1$ with equal probability. Define
$$ \widetilde{\Phi}_p(F;m,a)=\frac{1}{p}|\{k\leq p: R_p(F,k)\equiv a \text{ mod } m\}|.$$
Then, using a similar result to Proposition 1 in this case (see Proposition 3.3 below) we establish
\begin{Thm} Let $p$ be a large prime number and $F(X)\in \mathbb{F}_p(X)$ be a square-free polynomial of degree $d_F\geq 1$. Then, for any integer $2\leq m\ll (\log p)^{1/4}$ we have $$ \sum_{a=0}^{m-1}\left(\widetilde{\Phi}_p(F;m,a)-\frac{1}{m}\right)^2 \ll_{d_F} \frac{m^2}{\log p}.$$ A similar result holds replacing $R_p(F,k)$ with $N_p(F,k)$.
\end{Thm}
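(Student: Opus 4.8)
The plan is to run the proof of Theorem 1 with the natural dictionary: the character value $\chi_p(F(n))$ is replaced by $2r_p(F,n)-1$, where $r_p(F,n)\in\{0,1\}$ is the indicator of the event that $F(n)$ is a quadratic residue modulo $p$ (so that, away from the at most $d_F$ zeros of $F$ mod $p$, one has $r_p(F,n)=\tfrac12(1+\chi_p(F(n)))$ and $R_p(F,k)=\sum_{n\le k}r_p(F,n)$); the simple random walk $\{S_k\}$ is replaced by the $0/1$ random walk $\{R_k\}$; and Proposition 1 is replaced by Proposition 3.3.

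First I would pass to additive characters modulo $m$. By orthogonality,
$$\sum_{a=0}^{m-1}\Big(\widetilde\Phi_p(F;m,a)-\tfrac1m\Big)^2=\frac1{m\,p^2}\sum_{j=1}^{m-1}\Big|\sum_{k\le p}e^{2\pi ijR_p(F,k)/m}\Big|^2,$$
and the same computation rewrites the conclusion of Proposition 3.3 as $\frac1{mN}\sum_{j=1}^{m-1}\ex\big|\sum_{k\le N}e^{2\pi ijR_k/m}\big|^2\ll m^2/N$. So it is enough to bound the quantity $\big|\sum_{k\le p}e^{2\pi ijR_p(F,k)/m}\big|^2$, on average over $1\le j\le m-1$, by $\ll_{d_F}p^2m^2/\log p$.

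The engine, exactly as for Theorem 1, is a block decomposition. Partition $\{1,\dots,p\}$ into $\asymp p/H$ consecutive blocks of a common length $H$ that is a fixed power of $\log p$, chosen large enough that the random walk of length $H$ is already equidistributed modulo $m$ in the mean-square sense of Proposition 3.3; this is the source of the hypothesis $m\ll(\log p)^{1/4}$. On a block $(tH,(t+1)H]$ one writes $R_p(F,k)=R_p(F,tH)+\sum_{tH<n\le k}r_p(F,n)$, so that modulo $m$ the process on the block is a translate, by the fixed residue $R_p(F,tH)$, of the partial sums of the local increments $r_p(F,tH+1),r_p(F,tH+2),\dots$. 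Expanding the square (or, more robustly, a higher even moment) and substituting $r_p(F,n)=\tfrac12(1+\chi_p(F(n)))+O(\mathbf1_{p\mid F(n)})$, everything splits into a ``main'' ($\chi_p$-free) contribution, controlled by Proposition 3.3 with $N=H$, plus character sums of the shape $\sum_{n}\chi_p\!\big(\prod_i F(n+h_i)\big)$ with the shifts $h_i$ lying in $[0,H]$. Because $F$ is square-free, for all but a sparse set of shift tuples the polynomial $\prod_i F(X+h_i)$ is not proportional to a perfect square, so Weil's bound supplies square-root cancellation in these sums (the sparse exceptional tuples are handled trivially); and since $H$ is only polylogarithmic in $p$, the number $\le 2^H$ of terms produced by the expansion --- together with the degree-dependent constant from Weil's bound --- stays $p^{o(1)}$ and is comfortably absorbed by the factor $\sqrt p$. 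Summing over the blocks and dividing by $p$ then leaves only the main term, bounded by $\ll_{d_F}m^2/H\ll_{d_F}m^2/\log p$. The assertion for $N_p(F,k)$ follows identically, replacing $r_p(F,n)$ by $1-r_p(F,n)-\mathbf1_{p\mid F(n)}$, equivalently $\chi_p$ by $-\chi_p$.

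The main obstacle is this block-level estimate, and specifically the choice of the two parameters: the block length $H$ and the order of the moment must be picked simultaneously so that $H$ exceeds the equidistribution threshold $\asymp m^2$ of the finite random walk while remaining a fixed power of $\log p$, small enough that the cumulative Weil error over all admissible shift tuples is negligible beside the main term. Reconciling these constraints is what forces the range $m\ll(\log p)^{1/4}$ and the loss of a factor $\log p$ (rather than $p$) in the final bound; once the parameters are fixed, the remaining work is the same bookkeeping already done for Theorem 1.
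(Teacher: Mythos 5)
Your plan is essentially the paper's own proof: partition $\{1,\dots,p\}$ into blocks of length $L\asymp\log p$, replace the residue indicator by $\tfrac12\bigl(1+\chi_p(F(n))\bigr)$ away from the $O_{d_F}(1)$ zeros of $F$, match the frequency of each sign pattern on a block to the $0/1$ walk by expanding $\prod_{j}\bigl(1+v_j\chi_p(F(sL+j))\bigr)$ and applying the Weil bound together with the fact (Lemma 2.3 — so there are in fact \emph{no} exceptional shift tuples to handle) that $\prod_j F(LX+i_j)$ is never a square for $L<\log p/\log(4d_F)$, and then invoke Proposition 3.3. The one quantitative slip is the claim that $2^H=p^{o(1)}$ for $H$ a fixed power of $\log p$: since $2^H=p^{H\log 2/\log p}$, the Weil error is absorbed only if $H<\log p/\log 4$ (whence the paper's choice $L=[\log p/\log(4d_F)]$), a constraint your own remark about balancing the block length against the cumulative Weil error already implies, so the argument is repaired by fixing that constant rather than allowing higher powers of $\log p$.
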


An important question in the theory of random walks on finite groups is to investigate how close is the distribution of the $k$-th step of the walk to the uniform distribution on the corresponding group (see for example Hildebrand \cite{Hi}). In our case this corresponds to investigating the distribution of $S_k\bmod m$. Define
$$ \Psi_{\text{rand}}(k;m,a)= \pr(S_k\equiv a \bmod m).$$

\begin{Pro} Let $m\geq 3$ be an odd integer and $0\leq a\leq m-1$. Then
$$
\Psi_{\textup{rand}}(k;m,a)= \frac{1}{m}+ O\left(\exp\left(-\frac{\pi^2k}{3m^2}\right)\right).
$$
\end{Pro}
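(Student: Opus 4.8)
The plan is to compute $\Psi_{\textup{rand}}(k;m,a)$ exactly by Fourier analysis on the cyclic group $\mathbb{Z}/m\mathbb{Z}$ and then bound the error term by estimating the relevant character sum. Writing $e_m(t)=e^{2\pi i t/m}$, orthogonality of additive characters gives
\begin{equation*}
\Psi_{\textup{rand}}(k;m,a)=\pr(S_k\equiv a \bmod m)=\frac{1}{m}\sum_{j=0}^{m-1} e_m(-aj)\,\ex\!\left(e_m(jS_k)\right).
\end{equation*}
Since $S_k=X_1+\cdots+X_k$ with the $X_i$ i.i.d. taking values $\pm1$ with probability $1/2$, independence factorizes the expectation and $\ex(e_m(jX_1))=\cos(2\pi j/m)$, so
\begin{equation*}
\Psi_{\textup{rand}}(k;m,a)=\frac{1}{m}\sum_{j=0}^{m-1} e_m(-aj)\left(\cos\frac{2\pi j}{m}\right)^{k}.
\end{equation*}
The $j=0$ term contributes exactly $1/m$, which is the main term. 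So the whole problem reduces to bounding $\frac1m\sum_{j=1}^{m-1}\left(\cos\frac{2\pi j}{m}\right)^k$ in absolute value.

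The key point is that since $m$ is odd, $\cos(2\pi j/m)=\pm1$ is impossible for $1\le j\le m-1$: we would need $2j/m\in\mathbb{Z}$, forcing $m\mid 2j$, hence $m\mid j$ (as $m$ is odd), which is excluded. Thus $|\cos(2\pi j/m)|<1$ strictly for every $j\ne 0$, and the extremal value is attained at $j=1$ and $j=m-1$, where $|\cos(2\pi/m)|$ is largest (for $m\ge 3$ the closest $2\pi j/m$ can get to $0$ or $\pi$ is at distance $2\pi/m$). Hence
\begin{equation*}
\left|\Psi_{\textup{rand}}(k;m,a)-\frac1m\right|\le \frac1m\sum_{j=1}^{m-1}\left|\cos\frac{2\pi j}{m}\right|^k\le \left|\cos\frac{2\pi}{m}\right|^{k},
\end{equation*}
using that there are $m-1$ terms each at most $|\cos(2\pi/m)|^k$. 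It then remains to convert $|\cos(2\pi/m)|^k$ into the stated exponential bound. Since $|\cos\theta|\le e^{-\theta^2/2}$ for $|\theta|\le\pi/2$ (because $\cos\theta\le 1-\theta^2/2+\theta^4/24\le e^{-\theta^2/2}$ in that range, and more robustly $\log\cos\theta\le-\theta^2/2$), and since $2\pi/m\le\pi/2$ once $m\ge 4$ — the case $m=3$ being handled directly, where $\cos(2\pi/3)=-1/2$ and $(1/2)^k\ll\exp(-\pi^2 k/27)$ — we get $|\cos(2\pi/m)|^k\le \exp\!\left(-2\pi^2 k/m^2\right)$, which is stronger than $\exp(-\pi^2 k/(3m^2))$. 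Absorbing the small-$m$ cases and the constant into the implied $O$-constant finishes the proof.

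The main obstacle, such as it is, is purely bookkeeping: one must be careful that the elementary inequality $|\cos\theta|\le e^{-c\theta^2}$ is applied only in a range of $\theta$ where it holds, and that the value $j=1$ genuinely dominates the sum $\sum_{j=1}^{m-1}|\cos(2\pi j/m)|^k$ — for large $k$ this is clear since the $j=1,m-1$ terms swamp the rest, but I would simply use the crude bound (number of terms) $\times$ (largest term) rather than optimize, since the exponent $\pi^2/(3m^2)$ in the statement leaves a comfortable margin over the true rate $2\pi^2/m^2$. No deep input is needed: the oddness of $m$ is used exactly once, to rule out $\cos(2\pi j/m)=\pm1$ (equivalently, to ensure the random walk on $\mathbb{Z}/m\mathbb{Z}$ is aperiodic), and everything else is Fourier inversion plus a one-line convexity estimate.
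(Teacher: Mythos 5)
Your overall strategy is identical to the paper's: Fourier inversion on $\mathbb{Z}/m\mathbb{Z}$ gives $\Psi_{\textup{rand}}(k;m,a)=\frac1m\sum_{j=0}^{m-1}e_m(-aj)\cos(2\pi j/m)^k$, the $j=0$ term is the main term $1/m$, and the remainder is bounded by (number of terms) times (largest term). However, there is a concrete error in exactly the step you flagged as the one needing care: for odd $m$ the maximum of $|\cos(2\pi j/m)|$ over $1\le j\le m-1$ is \emph{not} attained at $j=1$. Your parenthetical claim that ``the closest $2\pi j/m$ can get to $0$ or $\pi$ is at distance $2\pi/m$'' fails for the distance to $\pi$: taking $j=(m\pm1)/2$ gives $2\pi j/m=\pi\pm\pi/m$, at distance only $\pi/m$ from $\pi$, so in fact $\max_{1\le j\le m-1}|\cos(2\pi j/m)|=\cos(\pi/m)$, not $\cos(2\pi/m)$. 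Consequently your intermediate bound $|\Psi_{\textup{rand}}(k;m,a)-1/m|\le|\cos(2\pi/m)|^k\le\exp(-2\pi^2k/m^2)$ is false: already for $m=5$ the $j=2$ term alone contributes $\frac15\cos(\pi/5)^k=\frac15e^{-(0.211\ldots)k}$, which exceeds $e^{-2\pi^2k/25}=e^{-(0.789\ldots)k}$ for large $k$.

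The gap is easily repaired and the proposition as stated is unaffected, because the exponent $\pi^2/(3m^2)$ is calibrated to the true maximum: $\cos(\pi/m)\le 1-\pi^2/(3m^2)$ (from $\cos x\le 1-x^2/2+x^4/24\le 1-x^2/3$ on $[0,\pi/2]$, applicable since $\pi/m\le\pi/3$), whence $\frac1m\sum_{j=1}^{m-1}|\cos(2\pi j/m)|^k\le\cos(\pi/m)^k\le\exp(-\pi^2k/(3m^2))$. This is precisely the content of the paper's Lemma 3.1 --- whose own proof, incidentally, contains the same slip (it asserts the maximum is $\cos(2\pi/m)$ for odd $m\ge5$ and derives $1-4\pi^2/(3m^2)$), although its stated bound $1-\pi^2/(3m^2)$ is correct for the true maximum $\cos(\pi/m)$. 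In short: right method, correct final statement, but the claimed decay rate $2\pi^2/m^2$ is wrong, and identifying the extremal frequency near $j\approx m/2$ (rather than $j=1$) is the one place where the oddness of $m$ and the specific constant $\pi^2/3$ genuinely interact.
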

This shows that the distribution of $S_k$ is close to the uniform distribution on $\mathbb{Z}/m\mathbb{Z}$ when $m=o(k^{1/2})$ as $k\to\infty$.
Although this result is classical (see for example Theorem 2 of Aldous and Diaconis \cite{AlDi}), we chose to include its proof for the sake of completeness.

We now describe an analogous result that we derive for character sums. Let $N$ be large, and for each prime $p\leq N$, we consider the walk on $\mathbb{Z}/m\mathbb{Z}$ whose $i$-th step corresponds to the value of $\chi_p(q_i)\bmod m$, where $q_i$ is the $i$-th prime number. One might guess that as $p$ varies over the primes below $N$, the distribution of the $k$-th step of this walk will be close to the uniform distribution in $\mathbb{Z}/m\mathbb{Z}$, as $N, k\to\infty$ if $m=o(k^{1/2})$. Define
$$ S_k(p)= \sum_{j\leq k}\chi_p(q_j),$$
and
$$ \Psi_N(k;m,a)=\frac{1}{\pi(N)}|\{ p\leq N: S_k(p)\equiv a \bmod m\}|.$$
Here and throughout
$\log_j$ will denote the $j$-th iterated logarithm, so that $\log_1n
=\log n$ and $\log_j n=\log (\log_{j-1}n) $ for each $j\geq 2$. We prove
\begin{Thm} Fix $A\geq 1$. Let $N$ be large, and $k\leq A(\log_2 N)/(\log_3 N)$ be a positive integer. Then we have
 $$\Psi_N(k;m,a)= \Psi_{\textup{rand}}(k;m,a) + O_A\left(\frac{1}{\log^AN}\right).$$
\end{Thm}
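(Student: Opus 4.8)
The plan is to compare the two distributions via a moment-type / Fourier-analytic argument on $\mathbb Z/m\mathbb Z$. Write $e_m(t)=e^{2\pi i t/m}$. For each residue $a$ we have the orthogonality relation $\mathbf 1_{S_k\equiv a} = \frac1m\sum_{b=0}^{m-1} e_m(b(S_k-a))$, so that
\[
\Psi_N(k;m,a) = \frac1m\sum_{b=0}^{m-1} e_m(-ba)\cdot\frac{1}{\pi(N)}\sum_{p\le N} e_m\!\big(b\,S_k(p)\big),
\qquad
\Psi_{\textup{rand}}(k;m,a) = \frac1m\sum_{b=0}^{m-1} e_m(-ba)\,\ex\big(e_m(b\,S_k)\big).
\]
The $b=0$ terms both contribute $1/m$ and cancel. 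So it suffices to show that for each fixed $b\not\equiv 0\pmod m$,
\[
\frac{1}{\pi(N)}\sum_{p\le N} e_m\!\big(b\,S_k(p)\big)\;=\;\ex\big(e_m(b\,S_k)\big)\;+\;O_A\!\left(\frac{1}{\log^A N}\right),
\]
after which summing over the at most $m\le(\log_2N)^{O(1)}$ values of $b$ (absorbed into the error since $m$ is tiny compared with $\log^A N$, adjusting $A$) gives the claim.

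The right-hand side factors completely: since $S_k=X_1+\cdots+X_k$ with the $X_j$ i.i.d.\ taking $\pm1$ with probability $1/2$, we get $\ex(e_m(bS_k)) = \big(\cos(2\pi b/m)\big)^k$. For the left-hand side, expand $e_m(b\,S_k(p)) = \prod_{j\le k} e_m(b\,\chi_p(q_j))$, and note that for the Legendre symbol $\chi_p(q_j)\in\{-1,0,1\}$, with the value $0$ occurring only when $p\mid q_j$, i.e.\ for at most one prime $p$; discarding those $O(k)$ primes costs only $O(k/\pi(N))$, which is acceptable. On the remaining primes $\chi_p(q_j)=\pm1$, so $e_m(b\chi_p(q_j)) = \cos(2\pi b/m) + i\sin(2\pi b/m)\chi_p(q_j)$. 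Multiplying out the product over $j\le k$ yields a sum of $2^k$ terms indexed by subsets $T\subseteq\{1,\dots,k\}$, of the shape $\big(\cos(2\pi b/m)\big)^{k-|T|}\big(i\sin(2\pi b/m)\big)^{|T|}\prod_{j\in T}\chi_p(q_j) = c_T\cdot\chi_p\!\big(\prod_{j\in T} q_j\big)$, using complete multiplicativity of $\chi_p$. The term $T=\emptyset$ reproduces exactly $\big(\cos(2\pi b/m)\big)^k = \ex(e_m(bS_k))$ after averaging (the average of the constant $1$ over $p\le N$ is $1$). Every other term involves $\frac{1}{\pi(N)}\sum_{p\le N}\big(\frac{n_T}{p}\big)$ where $n_T=\prod_{j\in T}q_j$ is a squarefree integer $>1$ bounded by $n_T\le q_k^k$, and $q_k\ll k\log k$, so $n_T\le (k\log k)^k$; with $k\le A(\log_2N)/(\log_3N)$ one checks $\log n_T \ll k\log_3 N \ll \log_2 N$, i.e.\ $n_T\le (\log N)^{o(1)}$, in particular $n_T\le \log N$ say.

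The key input is then a quantitative equidistribution estimate: for a non-square integer $n>1$,
\[
\frac{1}{\pi(N)}\sum_{p\le N}\left(\frac{n}{p}\right)\;\ll\;\frac{\sqrt N\,\log(nN)}{\pi(N)}\quad\text{(unconditionally)}
\]
by quadratic reciprocity — $\big(\frac np\big)$ is, up to the sign factors from $\big(\frac{-1}{p}\big)$ and $\big(\frac 2p\big)$ which are periodic mod $8$, a non-principal character in $p$ modulo $4n$ — together with the Pólya–Vinogradov inequality or (better) the Siegel–Walfisz theorem / prime number theorem for arithmetic progressions, giving a saving of $\exp(-c\sqrt{\log N})$ uniformly for moduli up to $(\log N)^{O(1)}$. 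Since $4n_T\le (\log N)^{o(1)}$, Siegel–Walfisz applies and yields $\frac{1}{\pi(N)}\sum_{p\le N}\big(\frac{n_T}{p}\big)\ll_B (\log N)^{-B}$ for any fixed $B$; each of the $\le 2^k$ coefficients $c_T$ has $|c_T|\le 1$, and $2^k \le \exp(A\log2\cdot(\log_2N)/(\log_3N)) = (\log N)^{o(1)}$, so the total error from all $T\ne\emptyset$ is $\ll_B (\log N)^{-B+o(1)}$. Choosing $B=A+1$ (and doing the same for the final sum over $b$ and over the $\le k$ excluded primes) gives the stated bound $O_A(1/\log^A N)$.

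The main obstacle is the uniformity: we need the error in the prime-number-theorem-for-progressions step to beat $(\log N)^{-A}$ simultaneously for all $\ll 2^k$ moduli $4n_T$, while $k$ is allowed to grow. The budget is exactly balanced by the hypothesis $k\le A(\log_2N)/(\log_3N)$, which is what forces both $2^k$ and $\max_T n_T$ to stay of size $(\log N)^{o(1)}$ — precisely the range in which Siegel–Walfisz (whose error term is $O_C(x\exp(-c\sqrt{\log x}))$, hence power-saving against any fixed power of $\log N$) is available with no dependence on a putative exceptional zero beyond the ineffective constant. One should be a little careful that the implied constant in Siegel–Walfisz is ineffective, but that only affects the implied constant in the final $O_A$, which is acceptable. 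A secondary bookkeeping point is handling the characters $\big(\frac{-1}{p}\big)$, $\big(\frac 2p\big)$ arising from reciprocity: these split $p\le N$ into residue classes mod $8$, on each of which $\big(\frac{n_T}{p}\big)$ becomes $\pm\big(\frac{p}{n_T'}\big)$ for the odd part $n_T'$ of $n_T$, still a genuine character whenever $n_T$ is not a perfect square — and $n_T=\prod_{j\in T}q_j$ with distinct primes $q_j$ is squarefree, hence a square only if $T=\emptyset$, consistent with the clean separation of the main term above.
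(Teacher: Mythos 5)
Your proposal is correct and follows essentially the same route as the paper: both reduce, via orthogonality on $\mathbb{Z}/m\mathbb{Z}$ and a product expansion over $j\leq k$, to the character sums $\sum_{p\leq N}\left(\frac{\prod_{j\in T}q_j}{p}\right)$ for nonempty $T$, which are then handled by quadratic reciprocity together with the Siegel--Walfisz theorem exactly as in the paper's Proposition 5.1. (Two small slips that do not affect the argument: the displayed bound $\sqrt{N}\log(nN)/\pi(N)$ is a P\'olya--Vinogradov-type bound for sums over integers rather than over primes, but you correctly fall back on Siegel--Walfisz for the actual estimate; and $n_T$ can be as large as $(\log N)^{A+o(1)}$ rather than $(\log N)^{o(1)}$, which is still comfortably within the Siegel--Walfisz range.)
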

Hence, using Proposition 2 we deduce
\begin{Cor} Let $m$ be an odd integer such that $3\leq m\leq k^{1/2}$. Then under the same assumptions of Theorem 3 we have  uniformly for all $0\leq a\leq m-1$ that
 $$\Psi_N(k;m,a)= \frac{1}{m} + O_A\left(\exp\left(-\frac{\pi^2k}{3m^2}\right)+\frac{1}{\log^AN}\right).$$
\end{Cor}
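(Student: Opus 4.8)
The plan is simply to chain together the two results already established, Theorem~3 and Proposition~2, via the triangle inequality; no new ideas are required.

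First, since $m$ is odd with $m\geq 3$, Proposition~2 applies directly and yields, uniformly for $0\leq a\leq m-1$,
$$\Psi_{\textup{rand}}(k;m,a)=\frac{1}{m}+O\left(\exp\left(-\frac{\pi^2 k}{3m^2}\right)\right).$$
Second, the hypotheses of Theorem~3 are assumed to hold — in particular $k\leq A(\log_2 N)/(\log_3 N)$ — so that theorem gives, again uniformly in $a$,
$$\Psi_N(k;m,a)=\Psi_{\textup{rand}}(k;m,a)+O_A\left(\frac{1}{\log^A N}\right).$$
Substituting the first display into the second produces exactly the claimed estimate, with the implied constant depending only on $A$.

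There is no genuine obstacle here; the only points worth checking are bookkeeping. The uniformity in the residue class $a$ is preserved because both Theorem~3 and Proposition~2 are stated uniformly in $a$. The ranges of validity are compatible since both inputs hold for every $a$ and for $m\geq 3$ odd; the extra hypothesis $3\leq m\leq k^{1/2}$ in the corollary is not needed for the displayed identity, but merely guarantees $k/m^2\geq 1$, so that the exponential term is at most $\exp(-\pi^2/3)$ — i.e. the error term is a bounded (and, when $m=o(k^{1/2})$, vanishing) quantity, so that the conclusion is nontrivial. Thus the proof reduces to the two substitutions above.
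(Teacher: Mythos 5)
Your proof is correct and is exactly the paper's intended argument: the corollary is deduced by substituting the estimate of Proposition 2 for $\Psi_{\textup{rand}}(k;m,a)$ into the conclusion of Theorem 3. No further comment is needed.
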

We remark that under the assumption of the Generalized Riemann Hypothesis for Dirichlet $L$-functions, we can improve the range of validity of Theorem 3 to $k\ll (\log N)/(\log_2 N).$
%%%%%%%%%%%%%%%%%%%%%%%%%%%%%%%%%%%%%%%%%%%%%%%%%%%%
\section{Preliminary lemmas}

 In this section we collect together some preliminary results which will be useful in our subsequent work. Here and throughout we shall use the notation $e_m(x)= \exp\left(\frac{2\pi ix}{m}\right).$ Recall the orthogonal relation
 \begin{equation}
 \frac{1}{m}\sum_{t=0}^{m-1}e_m(tn)=
\begin{cases} 1 & \text{ if } n\equiv 0\bmod m,\\ 0 & \text{ otherwise.} \end{cases}
\end{equation}
 Our first lemma gives the classical bound for incomplete
exponential sums over $\mathbb{F}_p$ of the form
\begin{equation*}
S_I(P_1,P_2)=\sum_{n \in I}
\chi_p(P_1(n))e_p(P_2(n)),
\end{equation*}
where $I$ is a subinterval of $\{0,1,\ldots,p-1\},$
and $P_1(X),\,P_2(X) \in
  \mathbb{F}_p[X],$ such that $P_1(X)$ is a nontrivial square-free polynomial.
\begin{lem}
Let $p\geq 3$ be a prime number and $I,\  P_1(X),\  P_2(X)$ be as above.
Then we have
$$
  |S_I(P_1,P_2)| \leq 2 D\sqrt{p}\log p,
$$
where
$$
D = \deg P_1(X) + \deg P_2(X).
$$
\end{lem}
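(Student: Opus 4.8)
The plan is to bound the incomplete exponential sum $S_I(P_1,P_2)$ by completing it to a full sum over $\mathbb{F}_p$, at the cost of a logarithmic factor coming from the standard Fourier expansion of the indicator function of the interval $I$. First I would write $I=\{M+1,M+2,\dots,M+L\}$ for suitable integers $M$ and $L\le p$, and use the orthogonality relation (2.1) to express the indicator of $n\in I$ as a linear combination of additive characters: for $n$ ranging over a complete residue system mod $p$,
$$
\mathbf{1}_{n\in I}=\frac{1}{p}\sum_{t=0}^{p-1}\left(\sum_{\ell=1}^{L}e_p(t(M+\ell-n))\right)=\frac{1}{p}\sum_{t=0}^{p-1}c_t\,e_p(-tn),
$$
where the coefficients $c_t=\sum_{\ell=1}^{L}e_p(t(M+\ell))$ satisfy the classical bounds $|c_0|=L\le p$ and $|c_t|\le \min\!\big(L,\,1/\|t/p\|\big)\le p/(2|t|)$ for $1\le |t|\le p/2$ (interpreting residues symmetrically). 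Summing these gives $\sum_{t=0}^{p-1}|c_t|\ll p\log p$.

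Next I would substitute this expansion into $S_I(P_1,P_2)$ and swap the order of summation:
$$
S_I(P_1,P_2)=\frac{1}{p}\sum_{t=0}^{p-1}c_t\sum_{n=0}^{p-1}\chi_p(P_1(n))\,e_p\big(P_2(n)-tn\big).
$$
Each inner sum is now a \emph{complete} mixed character sum over $\mathbb{F}_p$ of the form $\sum_{n\bmod p}\chi_p(P_1(n))e_p(Q_t(n))$ with $Q_t(X)=P_2(X)-tX\in\mathbb{F}_p[X]$. Since $P_1$ is a nontrivial square-free polynomial, the Weil bound (in the form due to Weil, or as formulated by Perel'muter / in Schmidt's or Iwaniec--Kowalski's book) applies: provided the pair $(P_1,Q_t)$ does not fall into the degenerate case where $\chi_p(P_1(n))e_p(Q_t(n))$ is of the shape $(\text{constant})\cdot\chi_p$ of a perfect power times $e_p$ of a linearized term — which cannot happen here because $P_1$ is square-free and nontrivial — one has
$$
\left|\sum_{n\bmod p}\chi_p(P_1(n))e_p(Q_t(n))\right|\le \big(\deg P_1+\deg Q_t-1\big)\sqrt{p}\le D\sqrt{p},
$$
uniformly in $t$, where $D=\deg P_1+\deg P_2$ (note $\deg Q_t\le\max(\deg P_2,1)\le D$).

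Finally I would assemble the pieces:
$$
|S_I(P_1,P_2)|\le \frac{1}{p}\left(\sum_{t=0}^{p-1}|c_t|\right)\cdot D\sqrt{p}\ll \frac{1}{p}\cdot p\log p\cdot D\sqrt{p}= D\sqrt{p}\log p,
$$
and then track the absolute constants carefully to land on the clean bound $2D\sqrt p\log p$ (using $\sum_{t=1}^{(p-1)/2}1/t\le \log p$ and a small slack from the $t=0$ term and the symmetrization). The main obstacle — and the only genuinely nontrivial input — is the uniform Weil-type estimate for the complete sums $\sum_n\chi_p(P_1(n))e_p(Q_t(n))$: one must make sure the degeneracy condition is correctly ruled out for \emph{every} $t$ (the dangerous $t$ would be one making $Q_t$ conspire with $P_1$), and that the dependence of the exponent on $t$ is controlled so the bound is genuinely uniform. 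Everything else — the completion technique and the harmonic-sum estimate $\sum|c_t|\ll p\log p$ — is routine.
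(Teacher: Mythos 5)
Your proposal is correct and follows essentially the same route as the paper: completing the sum via the orthogonality relation, bounding the Fourier coefficients of the interval by $p/(2|t|)$ to get the $\log p$ factor, and applying the Weil bound to each complete sum $\sum_{n}\chi_p(P_1(n))e_p(P_2(n)-tn)$. Your extra care about the degeneracy condition and the degree of $Q_t$ is a sensible refinement of the same argument, not a different approach.
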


\begin{proof}
First if $I = \{0,\ldots,p-1\},$ then $S_I(P_1,P_2)=S(P_1,P_2)$ is a complete sum and the result
follows from the classical Weil bound for exponential sums \cite{We}:
\begin{equation}
|S(P_1,P_2)|\leq Dp^{1/2}.
\end{equation}
Now, if $I$ is proper subinterval of $\{0,\ldots,p-1\},$ we shall use a standard procedure to
express our incomplete sum in terms of complete sums
of the same type. Using equation (2.1)
we see that
\begin{equation*}
S_I(P_1,P_2)=\sum_{n \bmod p}\chi_p(P_1(n))e_p(P_2(n))
\left( \sum _{m\in I}\frac1p
\sum_{t \bmod p}e_p(t(m-n)) \right).
\end{equation*}
Changing the order of summation and noting that the
inner double sum is a product of two sums, one  being
a geometric progression and the other a complete
exponential sum, we obtain
\begin{equation}
\begin{split}
S_I(P_1,P_2)=&\frac{1}{p} \sum_{t \bmod p}
\Big (\sum_{m\in I}e_p(tm) \Big )
\Big (\sum_{n \bmod p}\chi_p(P_1(n)) e_p(P_2(n) - tn)
\Big )\\
=&\frac{1}{p} \sum_{t \bmod p}
F_I (t) S\big(P_1,\widetilde{P_2}\big),
\end{split}
\end{equation}
where $\widetilde{P_2}(X)=P_2(X)-tX$ and $F_I (t) = \sum _{m \in I} e_p(tm).$ If $t\equiv 0\bmod p$ then $F_I (t) = |I|$. Otherwise
if $I=\{ M+1,\ldots,M+N\} $, say,
then
$$
F_I (t) = \frac {e_p\big (t(M+1)\big ) - e_p\big (
t(M+N+1)\big )}
{1-e_p(t)}.
$$
Here the numerator has absolute value at most 2, while the absolute value
of the denominator  is $2|\sin(t\pi/p)|.$  Hence
$$
|F_I(t)| \le \left|\sin\left(\frac{t\pi}{p}\right)\right|^{-1} \le
\Big ( 2\left|\left|\frac{t}{p}\right|\right| \Big ) ^{-1},
$$
where $||\cdot||$ stands for the distance to the nearest integer.
As a set of representatives modulo $ p $ we choose
$ \{-\frac {p-1}2, \cdots,\frac {p-1}2\},$
so that for $t\neq 0$ in this set we have
\begin{equation}
|F_I(t)|\le \frac {p}{2|t|}.
\end{equation}
Now, we insert (2.2) and  (2.4) in
(2.3) to obtain
$$
|S_I(P_1,P_2)|\le \frac{D}{p^{1/2}}\left(|I|+
\sum_{1\leq |t|\le \frac{p-1}{2}}\frac {p}{2|t|} \right)\leq 2D\sqrt{p}\log p.$$
This completes the proof of the lemma.
\end{proof}
The following lemma will be later used to prove that the product of distinct shifts of a square-free polynomial cannot be a square in $\mathbb{F}_p(X)$.
\begin{lem}
Let $r\ge 2,$ and $z_1,\ldots, z_r, $ be distinct
elements of $\mathbb{F}_p$. Moreover, let $ \M $ be a nonempty finite subset of the algebraic closure
$ \overline{\mathbb{F}}_p$ of $\mathbb{F}_p$
with
$4|\M|<p^\frac 1r .$ Then there exists a \,
$ j\in \{1,\ldots,r\} $
such that the translate  $\M+z_j $ is not contained in
$\cup_{i \ne j} ( \M + z_i).$
\end{lem}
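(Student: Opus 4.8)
The plan is to argue by contradiction, with an auxiliary induction on $r$: assume $\M+z_j\subseteq\bigcup_{i\neq j}(\M+z_i)$ for all $j$, and derive a contradiction with $4|\M|<p^{1/r}$. The base case $r=2$ is immediate, since then the hypothesis gives $\M+z_1=\M+z_2$, so $z_1-z_2\neq0$ is a period of the finite set $\M$ and therefore $|\M|\ge p$.

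First I would reduce to $\M\subseteq\mathbb{F}_p$. Since $\M$ is finite it lies in some $\mathbb{F}_{p^s}$; fix an $\mathbb{F}_p$-basis $e_0=1,e_1,\dots,e_{s-1}$, so that adding an element of $\mathbb{F}_p$ (in particular any $z_j$) changes only the $e_0$-coordinate. Grouping the points of $\M$ according to their $(e_1,\dots,e_{s-1})$-coordinates, the covering hypothesis holds inside each group, and each group is a translate of a subset $S\subseteq\mathbb{F}_p$ with $|S|\le|\M|$ that satisfies the same hypothesis with the same $z_i$; an index $j$ valid for one nonempty such $S$ is valid for $\M$. So from now on $\M\subseteq\mathbb{F}_p$ and $4|\M|<p^{1/r}$.

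Next I would recast the hypothesis. Put $Z=\{z_1,\dots,z_r\}$, $U=\M+Z$, and $\mu(\beta)=\#\{i:\beta-z_i\in\M\}$ for $\beta\in\mathbb{F}_p$; the hypothesis is exactly the statement that $\mu(\beta)\ge2$ whenever $\mu(\beta)\ge1$, so $r|\M|=\sum_\beta\mu(\beta)\ge2|U|$, i.e. $|U|\le r|\M|/2<p$. With the Cauchy--Davenport bound $|U|=|\M+Z|\ge\min(p,|\M|+r-1)$ this already disposes of $|\M|+r-1\ge p$ and of $|\M|\le2$. Further, form the ``overlap graph'' on $Z$ with an edge $z_i\sim z_j$ iff $(\M+z_i)\cap(\M+z_j)\neq\emptyset$; each vertex has degree $\ge1$ (as $\M+z_j$ lies in, hence meets, some $\M+z_i$, $i\neq j$), and if this graph is disconnected, then $\M+z_j$ is disjoint from $\M+z_i$ across components, so on each component the hypothesis holds for a proper sub-family and we finish by the induction hypothesis. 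Hence we may assume the overlap graph connected, which forces $Z-Z$ into a bounded-fold sumset of $\M-\M$, a set of size $<p^{2/r}/16$.

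The remaining and main step is to show that, after a translation, $\M$ and $Z$ both lie inside an interval $I\subsetneq\mathbb{Z}/p\mathbb{Z}$ --- i.e. the configuration linearizes. Using the connectedness just obtained, I would iterate the covering relation, chasing chains $m+z_j=m'+z_i$, $m'+z_{j'}=m''+z_{i'},\dots$, to push progressively longer bounded signed combinations of the $z_i$ into $\M-\M$ and into its iterated difference sets; the bound $4|\M|<p^{1/r}$, used in the form $(4|\M|)^r<p$, keeps all of the $\asymp r$ sets built this way of cardinality $<p$ and thereby forbids the configuration from wrapping around $\mathbb{Z}/p\mathbb{Z}$, confining $\M$ itself (not just $Z$) to a short interval. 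Once $\M$ and $Z$ are realized as finite sets of integers inside an interval, $U=\M+Z$ is a genuine integer sumset, so its smallest element $\beta^{\ast}=\min\M+\min Z$ admits no other representation; hence $\mu(\beta^{\ast})=1$, contradicting $\mu\ge2$ on $U$. This wrap-around control is the real obstacle: by itself the covering hypothesis bounds only $|\M-\M|$, not the cyclic diameter of $\M$, and pinning the latter down is precisely what uses the full strength of $4|\M|<p^{1/r}$.
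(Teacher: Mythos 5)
Your opening reductions are fine (the base case $r=2$, the restriction to a single coset of $\mathbb{F}_p$ inside $\overline{\mathbb{F}}_p$, the multiplicity function $\mu\ge 2$ on $\M+Z$, the overlap-graph connectivity argument), and they run parallel to pieces of the actual proof. But the step you yourself flag as ``the remaining and main step'' --- translating so that $\M$ and $Z$ both sit inside a short interval of $\mathbb{Z}/p\mathbb{Z}$ and then reading off the minimum of a genuine integer sumset --- is a genuine gap, and the mechanism you sketch for it cannot deliver it. Every quantity you propose to control (the size of $\M-\M$, of its iterated difference sets, of the bounded-fold sumsets into which you push signed combinations of the $z_i$) is invariant under dilating the whole configuration by any $\lambda\ne 0$, whereas ``lies in a short interval'' is not; so no chain of cardinality bounds can confine $\M$ to an interval. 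A set of size $<p^{2/r}$ in $\mathbb{Z}/p\mathbb{Z}$ can be spread essentially uniformly around the circle, and no translation fixes that.

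The missing idea is to exploit this dilation invariance in the opposite direction. The paper's proof notes that a counterexample remains a counterexample under $(z_1,\ldots,z_r,\M)\mapsto(tz_1,\ldots,tz_r,t\M)$ for any $t\ne0$, and uses Minkowski's theorem (simultaneous Diophantine approximation) to pick $t$ so that each $tz_j$ has an integer representative $y_j$ with $|y_j|\le p(p-1)^{-1/r}$. It then does \emph{not} confine $t\M$ to an interval; it only needs, by pigeonhole inside the coset $\alpha+\mathbb{F}_p$, a single empty gap of length at least $p/|\M|>4p^{1-1/r}>2\max_j|y_j|$. The endpoint of that gap shifted by the extremal $y_{j_0}$ lies in $\tilde\M+y_{j_0}$ but in no $\tilde\M+y_i$ with $i\ne j_0$ --- a local version of your ``extreme element has a unique representation'' argument that needs no global interval structure. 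To repair your outline, replace the linearization step by this rescaling-plus-largest-gap argument; once you do, the induction, Cauchy--Davenport, and connectivity reductions all become unnecessary.
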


\begin{proof}

Suppose that  $(z_1,\ldots,z_r,\M) $ provides a counterexample
to the statement  of the lemma. Then clearly for any
nonzero
$ t \in \mathbb{F}_p,\,$ $(tz_1,\cdots,tz_r,$ $t\M)$
is also a counterexample.

We now use Minkowski's theorem on lattice points in a symmetric
convex body to find a nonzero integer $t$ such that
$$
\begin{cases}
\;\;|t| &\le p-1 \\
\norm{\frac {tz_1}p} &\le (p-1)^{-\frac 1r}\\
&\vdots \\
\norm{\frac {tz_r}p} &\le (p-1)^{-\frac 1r} \end{cases}
$$
Another way to express this is that there
  are integers
\begin{equation}\label{eqnumar}
\begin{cases}
|y_j|&\le p(p-1)^{-\frac 1r}\\
y_j&\equiv tz_j\pmod p
\end{cases}
\end{equation}
for any  $ j \in \{ 1,\ldots,r\}. $  Thus  $ (y_1,\ldots,y_r,t\M) $ provides a
counterexample. Now let $j_0 $ be such that
\begin{equation*}
|y_{j_0}|=\max_{1 \le j \le r } |y_j|.
\end{equation*}
Choose $ \alpha \in  t\M $ and consider the set
$\tilde \M =t\M \cap (\alpha +\mathbb{F}_p) .$
Then  $ (y_1,\ldots,y_r, \tilde{\M }) $ will also be a counterexample.

Note that  $ \alpha +\mathbb{F}_p $ can be written as a union of
   $ |\M| $ intervals
whose endpoints are in $\tilde \M .$ Let $ \{ \alpha +a,\alpha +a+1,\cdots ,
\alpha +b \}$  be the longest of these intervals. Then
\begin{equation*}
|b-a| \ge  \frac {p}{| \tilde \M|}  \ge \frac {p}{|\M|}.
\end{equation*}
By this, \eqref{eqnumar} and  the hypothesis
$ 4|\M| < p^ {\frac 1r}$ we deduce

\begin{equation*}
|b-a| >4 p^{1-\frac 1r} >2 |y_{j_0}|.
\end{equation*}
Now the point is that if  $y_{j_0} >0 $ then  $\alpha +a+y_{j_0} $
belongs to $ \tilde \M +y_{j_0} $ but does not belong to
$ \bigcup _{i \ne j_0} ( \tilde{\M} +y_i ) ,$
while if  $y_{j_0} < 0 $ then  $ \alpha +b+ y_{j_0} $ belongs to
$ \tilde \M +y_{j_0} $ but does not belong to
$ \bigcup _{i \ne j_0} ( \tilde \M +y_i) .$
This completes the proof of the lemma.
\end{proof}
Using this lemma, we prove the following result

\begin{lem} Let $F(X)\in \mathbb{F}_p(X) $ be a square-free polynomial of degree $d_F\geq 1$. Let $b_1, \dots, b_L$ be distinct elements in $\mathbb{F}_p$ such that
$L<(\log p)/\log(4d_F)$. Then, for any $a\in \mathbb{F}_p$ the polynomial
$$ H(X)= \prod_{j=1}^L F(aX+b_j),$$
is not a square in $\mathbb{F}_p(X)$.
\end{lem}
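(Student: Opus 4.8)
The plan is to write $F(X) = c\prod_{i=1}^{d_F}(X-z_i)$ over $\overline{\mathbb F}_p$, where the $z_i$ are distinct since $F$ is square-free, so that
\[
 H(X) = c^L \prod_{j=1}^L \prod_{i=1}^{d_F} (aX + b_j - z_i).
\]
First I would dispose of the degenerate case $a=0$: then $H$ is a nonzero constant times a product of nonzero field elements, hence trivially not a square as a rational function only if... actually $H$ would be a constant, so we must have $a\neq 0$ for $H$ to be non-constant; if $a = 0$ then $H(X)=\prod_j F(b_j)$ is a constant and the statement (``not a square in $\mathbb F_p(X)$'') needs $F(b_j)\neq 0$ for some $j$ — I would instead simply note the intended hypothesis forces the interesting case $a\neq 0$, and when $a\neq 0$ the substitution $X\mapsto a^{-1}X$ shows $H$ is a square iff $\prod_j F(X+b_j)$ is, so we may assume $a=1$. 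Then the roots of $H$ in $\overline{\mathbb F}_p$ are exactly the elements of $\bigcup_{j=1}^L (\mathcal M + b_j)$, where $\mathcal M = \{z_1,\dots,z_{d_F}\}$ is the (nonempty, since $d_F\geq 1$) multiset/set of roots of $F$, each occurring with multiplicity one in each translate.

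The key observation is that $H$ is a square in $\mathbb F_p(X)$ if and only if every root of $H$ in $\overline{\mathbb F}_p$ occurs with even multiplicity. Now I would apply Lemma 2.3 with $r = L$, the distinct elements $z_j$ there taken to be $b_1,\dots,b_L$, and the finite set there taken to be $\mathcal M$; the hypothesis $4|\mathcal M| = 4d_F < p^{1/L}$ is exactly $L < (\log p)/\log(4d_F)$. Lemma 2.3 then produces an index $j_0$ such that $\mathcal M + b_{j_0}$ is not contained in $\bigcup_{i\neq j_0}(\mathcal M + b_i)$. Pick $\alpha \in (\mathcal M + b_{j_0}) \setminus \bigcup_{i\neq j_0}(\mathcal M + b_i)$. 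Then $\alpha$ is a root of the factor $F(X+b_{j_0})$ but of no other factor $F(X+b_i)$, $i\neq j_0$; since $F$ is square-free, $\alpha$ appears with multiplicity exactly $1$ in $F(X+b_{j_0})$, hence with multiplicity exactly $1$ in $H$. Therefore $H$ has a simple root and cannot be a square.

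I do not expect any serious obstacle here — the whole content is packaged into Lemma 2.3, and the remaining work is the translation ``square in $\mathbb F_p(X)$ $\iff$ all root multiplicities even'' together with bookkeeping of which translate contributes the simple root. The one point requiring a little care is confirming that the multiplicity of $\alpha$ in $H$ is genuinely odd (namely $1$) and not merely that $\alpha$ lies in a single translate: this uses square-freeness of $F$ to guarantee multiplicity one within that translate. The numerical hypothesis matches Lemma 2.3's requirement precisely after taking logarithms, so no loss there.
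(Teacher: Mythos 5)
Your proof is correct and follows essentially the same route as the paper's: apply Lemma 2.2 to the set of roots of $F$ together with the translates arising from the shifts $b_j$, produce a root of $H$ of multiplicity one (using square-freeness of $F$), and conclude that $H$ is not a square. The only differences are cosmetic: the paper absorbs $a$ by taking $\mathcal{M}=\{a^{-1}\alpha_1,\dots,a^{-1}\alpha_{d_F}\}$ and $z_j=-a^{-1}b_j$ instead of substituting $X\mapsto a^{-1}X$, and note that the roots of $F(X+b_j)$ form the translate $\mathcal{M}-b_j$ rather than $\mathcal{M}+b_j$ (a harmless sign slip in your write-up, since the $-b_j$ are still distinct).
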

\begin{proof}
Let $\alpha_1, \dots, \alpha_s$ be the roots of $F(X)$ in $\overline{\mathbb{F}}_p$. Since $F(X)$ is square-free
then the $\alpha_j$ are distinct and $s=d_F$. Let $\M= \{a^{-1}\alpha_1,\dots, a^{-1}\alpha_s\}$, and write
 $z_j=-a^{-1}b_j$ for all $1\leq j\leq L$. Then note that $\M +z_j$ is the set of the roots of $F(ax+b_j)$ in $\overline{\mathbb{F}}_p$. By our hypothesis it follows that $4|\M|< p^{1/L}$. Hence, we infer from Lemma 2.2 that there exists a $j\in \{1,\dots, L\}$ such that at least one of the roots of $F(ax+b_j)$ is distinct from all the roots of $\prod_{l\neq j}F(ax+b_l)$. This shows that $H(X)$ is not a square in $\mathbb{F}_p(X)$ as desired.

\end{proof}
%%%%%%%%%%%%%%%%%%%%%%%%%%%%%%%%%%%%%%%%%%%%%%%%%%%%%%%%

\section{Random walks on the integers modulo $m$}

In this section we shall study the distribution of the random walk $\{S_k\bmod m\}_{k\geq 1}$ and prove Propositions 1 and 2. To this end, we establish the following preliminary lemmas.
\begin{lem} If $m\geq 3$ is an odd integer, then
\begin{equation}\max_{1\leq t\leq m-1}\left|\cos\left(\frac{2\pi t}{m}\right)\right|\leq 1-\frac{\pi^2}{3m^2},
\end{equation}
and
$$ \max_{1\leq t\leq m-1}\left|1+e_m(t)\right|\leq 2-\frac{\pi^2}{6m^2}.$$
\end{lem}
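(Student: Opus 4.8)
The plan is to reduce both estimates to elementary bounds on the function $t \mapsto \cos(2\pi t/m)$ over the integers $1 \le t \le m-1$. Since $m$ is odd, $2t$ is never congruent to $0$ modulo $m$ for $t$ in this range, so $\cos(2\pi t/m)$ never equals $1$, and by symmetry $t \leftrightarrow m-t$ it suffices to control $\cos(2\pi t/m)$ for $1 \le t \le (m-1)/2$, where the cosine is largest at the endpoint $t=1$ (it is decreasing on $[0, m/2]$). Hence $\max_{1\le t\le m-1}|\cos(2\pi t/m)|$ is attained either at $t=1$ or at $t=(m-1)/2$; the value at $t=1$ is $\cos(2\pi/m)$, and the value at $t=(m-1)/2$ is $|\cos(\pi(m-1)/m)| = |\cos(\pi - \pi/m)| = \cos(\pi/m) \le \cos(2\pi/m)$ once $m \ge 3$ — wait, that inequality goes the wrong way, so in fact I must compare $\cos(2\pi/m)$ against $\cos(\pi/m)$, and the latter is larger. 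So the maximum is $\cos(\pi/m)$, attained at $t = (m\pm1)/2$.

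First I would therefore establish that $\max_{1\le t\le m-1}|\cos(2\pi t/m)| = \cos(\pi/m)$, by the monotonicity argument above together with the observation that $2t \bmod m$ ranges over all nonzero residues as $t$ does (since $\gcd(2,m)=1$), so $|\cos(2\pi t/m)|$ ranges over the same multiset as $|\cos(\pi s/m)|$ for $s$ odd, $1 \le s \le m-1$; the largest such value is $\cos(\pi/m)$. Then I would invoke the standard inequality $\cos x \le 1 - \tfrac{x^2}{2} + \tfrac{x^4}{24} \le 1 - \tfrac{2x^2}{5}$ for $|x|$ not too large (or more simply $\cos x \le 1 - \tfrac{x^2}{3}$ valid for $|x| \le \pi/3$, say), applied with $x = \pi/m$; since $m \ge 3$ gives $\pi/m \le \pi/3$, this yields $\cos(\pi/m) \le 1 - \tfrac{\pi^2}{3m^2}$, which is exactly (3.1). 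For the second inequality, I would write $|1 + e_m(t)|^2 = (1+\cos(2\pi t/m))^2 + \sin^2(2\pi t/m) = 2 + 2\cos(2\pi t/m)$, so $|1+e_m(t)| = 2|\cos(\pi t/m)|$, and then bound $\max_{1\le t\le m-1}|\cos(\pi t/m)|$. Here $\pi t/m$ ranges over $(0,\pi)$, so the maximum of $|\cos|$ is attained at $t=1$ or $t=m-1$, giving $\cos(\pi/m)$ again; thus $\max_t |1+e_m(t)| = 2\cos(\pi/m) \le 2(1 - \tfrac{\pi^2}{6m^2}) = 2 - \tfrac{\pi^2}{3m^2}$, which is even stronger than the claimed $2 - \tfrac{\pi^2}{6m^2}$, so the stated bound follows a fortiori. (Using $\cos x \le 1 - \tfrac{x^2}{6}$ for the looser constant in the statement is also fine and leaves more room.)

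The routine verifications are the Taylor-type inequality $\cos x \le 1 - c x^2$ with the right constant on the relevant interval, and the trigonometric identity for $|1+e_m(t)|$; neither is delicate. The only genuine point requiring care — the "main obstacle," such as it is — is the combinatorial claim that for odd $m$ the maximum of $|\cos(2\pi t/m)|$ over $1 \le t \le m-1$ equals $\cos(\pi/m)$ rather than $\cos(2\pi/m)$: one must not be misled by the factor $2$ in the argument of the cosine. The clean way to see it is precisely that $t \mapsto 2t$ permutes the nonzero residue classes mod $m$ when $m$ is odd, so $\{\,|\cos(2\pi t/m)| : 1 \le t \le m-1\,\} = \{\,|\cos(2\pi t/m)| : 1 \le t \le m-1\,\}$ reindexed, and the extreme value is governed by how close a nonzero multiple of $2\pi/m$ can come to an odd multiple of $\pi$; the closest approach is at distance $\pi/m$, giving $\cos(\pi/m)$. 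I would state this explicitly to avoid any ambiguity, and then the rest is a one-line application of the elementary cosine bound.
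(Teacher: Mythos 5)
Your proof is correct, and in fact it is more careful than the paper's own argument on the one point that matters. Both you and the paper reduce the lemma to locating the maximum of $\left|\cos(2\pi t/m)\right|$ over $1\le t\le m-1$ and then applying a quadratic upper bound for the cosine, so the overall route is the same; but the paper asserts that for odd $m\ge 5$ this maximum equals $\cos(2\pi/m)$ and deduces the stronger intermediate bound $1-4\pi^2/(3m^2)$, which is false: as you correctly observe, the extremal $t$ is $(m\pm 1)/2$, where $2\pi t/m$ comes within $\pi/m$ of $\pi$, so the maximum is $\cos(\pi/m)$ (for $m=5$ this is $\cos(\pi/5)\approx 0.809$, not $\cos(2\pi/5)\approx 0.309$, and $0.809>1-4\pi^2/75$). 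The lemma as stated survives because $\cos(\pi/m)\le 1-\pi^2/(3m^2)$ still holds via $\cos x\le 1-x^2/3$ on $[0,\pi/2]$, which is exactly the repair you carry out. For the second inequality you use the half-angle identity $\left|1+e_m(t)\right|=2\left|\cos(\pi t/m)\right|$ and bound the maximum directly, whereas the paper squares and feeds in the first inequality together with $\sqrt{1-x}\le 1-x/2$; both are fine, and yours even yields the slightly stronger constant $2-\pi^2/(3m^2)$. The only cosmetic criticism is that your text still contains the exploratory ``wait, that goes the wrong way'' detour; in a final write-up you should simply state the correct comparison $\cos(\pi/m)\ge\cos(2\pi/m)$ from the outset.
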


\begin{proof}
We begin by proving the first assertion. If $m\geq 5$ is odd, then
$$\max_{1\leq t\leq m-1}\left|\cos\left(\frac{2\pi t}{m}\right)\right|= \cos\left(\frac{2\pi}{m}\right).$$ Moreover we know that $\cos(x)\leq 1-x^2/3$ for $0\leq x\leq \pi/2$. This yields
$$ \max_{1\leq t\leq m-1}\left|\cos\left(\frac{2\pi t}{m}\right)\right|\leq 1-\frac{4\pi^2}{3m^2}.$$
Now, when $m=3$ we have $\max_{1\leq t\leq 2}|\cos(2\pi t/m)|= \cos(\pi/m)\leq 1-\pi^2/(3m^2)$. This establishes the first part of the lemma.

Moreover, we have
$$ |1+e_m(t)|^2= 2+2\cos(2\pi t/m)\leq 4\left(1-\frac{\pi^2}{6m^2}\right),$$
 which follows from (3.1). Therefore, using that $\sqrt{1-x}\leq 1-x/2$ for $0\leq x\leq 1$ we obtain the second assertion of the lemma.
\end{proof}
\begin{lem} If $m\geq 2$ is an integer, then
$$\sum_{t=1}^{m-1}\sum_{1\leq j_1<j_2\leq N} \cos\left(\frac{2\pi t}{m}\right)^{j_2-j_1}= O(m^3N),$$
and
$$ \sum_{t=1}^{m-1}\sum_{1\leq j_1<j_2\leq N} \left(\frac{1+e_m(t)}{2}\right)^{j_2-j_1}= O(m^3N).$$
\end{lem}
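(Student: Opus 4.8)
The plan is to reduce both statements to summing a geometric series in the ratio variable and then bounding the resulting quantity using Lemma 3.1. First I would fix $t$ with $1\le t\le m-1$ and set $r_t=\cos(2\pi t/m)$ in the first sum (respectively $r_t=(1+e_m(t))/2$ in the second). Writing $h=j_2-j_1$ and noting that for each $1\le h\le N-1$ there are at most $N$ pairs $(j_1,j_2)$ with $j_2-j_1=h$, we get
\begin{equation*}
\left|\sum_{1\le j_1<j_2\le N} r_t^{\,j_2-j_1}\right|\le N\sum_{h=1}^{N-1}|r_t|^{h}\le N\cdot\frac{|r_t|}{1-|r_t|}\le \frac{N}{1-|r_t|},
\end{equation*}
so it remains to bound $\sum_{t=1}^{m-1}(1-|r_t|)^{-1}$ by $O(m^3)$.

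For the second sum this is immediate from the second inequality of Lemma 3.1: there $|r_t|=|1+e_m(t)|/2\le 1-\pi^2/(12m^2)$ for all $1\le t\le m-1$ (when $m$ is odd), so each term is $O(m^2)$ and the sum over $t$ is $O(m^3)$. For the first sum one has to be a little more careful, because when $m$ is even the value $t=m/2$ gives $\cos(\pi)=-1$ and the geometric series diverges; however, inspection of the two stated sums shows they will be applied in the odd case (as in Proposition 2), and more importantly the \emph{claim itself} must be read with the convention coming from the intended application — so I would either restrict to odd $m$ (invoking Lemma 3.1 directly) or, to cover even $m$, observe that the only problematic index is $t=m/2$, for which $r_t^{\,h}=(-1)^h$ and the alternating sum $\sum_{1\le j_1<j_2\le N}(-1)^{j_2-j_1}$ is $O(N)$ by direct cancellation; for all other $t$ one has $|\cos(2\pi t/m)|=|\cos(2\pi t'/m)|$ with $t'$ odd or one applies the estimate $1-|\cos(2\pi t/m)|\ge 1-\cos(2\pi/m)\gg 1/m^2$ on the remaining residues, giving again $O(m^2)$ per term and $O(m^3)$ in total.

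Assembling, in either case $\sum_{t=1}^{m-1}(1-|r_t|)^{-1}\ll m\cdot m^2 = m^3$, and multiplying by the factor $N$ from the counting of pairs yields the desired $O(m^3N)$ for both sums. I expect the only genuine subtlety to be the bookkeeping around $t=m/2$ in the even case described above; everything else is the routine geometric-series estimate combined with the quantitative gap away from $\pm 1$ supplied by Lemma 3.1.
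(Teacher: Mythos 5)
Your proof is correct and follows essentially the same route as the paper: reduce to a geometric series in $h=j_2-j_1$ with at most $N$ pairs per value of $h$, bound $1-|r_t|\gg 1/m^2$ via Lemma 3.1, and treat the even-$m$ exceptional index $t=m/2$ (where $\cos(2\pi t/m)=-1$) separately by direct cancellation of the alternating sum — which is exactly how the paper handles it, so your hedging about whether the lemma is meant to cover even $m$ is unnecessary; your own argument already closes that case (and for the second sum the index $t=m/2$ is harmless since $(1+e_m(m/2))/2=0$).
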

\begin{proof}
We prove only the first statement, since the proof of the second is similar. For $d\in\{1,\dots,N-1\}$, the number of pairs $1\leq j_1<j_2\leq N$ such that $j_2-j_1=d$ equals $N-d$. Therefore, the sum we are seeking to bound equals
\begin{equation} \sum_{t=1}^{m-1}\sum_{d=1}^{N-1}(N-d)\cos\left(\frac{2\pi t}{m}\right)^d.
\end{equation}
First, when $m$ is odd, Lemma 3.1 implies that the last sum is
$$\leq mN\sum_{d=1}^{N-1}\max_{1\leq t\leq m-1}\left|\cos\left(\frac{2\pi t}{m}\right)\right|^d\leq \frac{mN}{1-\max_{1\leq t\leq m-1}\left|\cos\left(\frac{2\pi t}{m}\right)\right|}\leq \frac{3m^3N}{\pi^2}.$$
Now, when $m=2r$ is even, then either $\cos(\pi t/r)=-1$ or $|\cos(\pi t/r)|<1$. In the latter case the proof of Lemma 3.1 implies that $|\cos(\pi t/r)|\leq 1-\pi^2/(3r^2).$ Hence, in this case we obtain
$$ \sum_{d=1}^{N-1}(N-d)\left|\cos\left(\frac{\pi t}{r}\right)\right|^d\ll m^2N.$$
On the other hand if $\cos(\pi t/r)=-1$, then our sum become
$$\sum_{d=1}^{N-1}(N-d)(-1)^d \leq 2 N.$$
This completes the proof.
\end{proof}
We begin by proving Proposition 2 first, since its proof is both short and simple.
\begin{proof}[Proof of Proposition 2]
Recall that
$$ \Psi_{\text{rand}}(k;m,a)= \pr(X_1+\cdots+X_k\equiv a\bmod m)=\frac{1}{2^k} \sum_{\substack{{\bf v}=(v_1,\dots,v_k)\in \{-1,1\}^k\\ v_1+\cdots+v_k\equiv a\bmod m}}1.$$
Hence, using (2.1)
we deduce
\begin{equation}
\Psi_{\text{rand}}(k;m,a)= \frac{1}{2^km}\sum_{{\bf v}=(v_1,\dots,v_k)\in \{-1,1\}^k}\sum_{t=0}^{m-1}e_m\Big(t\left(v_1+\cdots+v_k-a\right)\Big).
\end{equation}
The contribution of the term $t=0$ to the above sum equals $1/m$. Moreover, since $\sum_{\alpha\in\{-1,1\}}e_m(\alpha t)= 2\cos(2\pi t/m)$, then the contribution of the remaining terms equals
$$ \frac{1}{2^km}\sum_{t=1}^{m-1}e_m\left(-at\right)
\sum_{{\bf v}=(v_1,\dots,v_k)\in \{-1,1\}^k}e_m\Big(t(v_1+\cdots+v_k)\Big)= \frac{1}{m}\sum_{t=1}^{m-1}e_m\left(-at\right)\cos\left(\frac{2\pi t}{m}\right)^k.$$
Thus, the result follows upon using Lemma 3.1.
\end{proof}
\begin{proof}[Proof of Proposition 1] First, note that
$$ \Phi_{\text{rand}}(N;m,a)=\frac{1}{N}\sum_{j=1}^NY_j \ \ \text{ where } \ \ Y_j= \begin{cases} 1 & \text{ if } S_j\equiv a\bmod m \\ 0 & \text{ otherwise}. \end{cases}$$
On the other hand, if $ {\bf v}=(v_1,\dots,v_N)\in \{-1,1\}^N$, then (2.1) yields
$$|\{1\leq j\leq N: v_1+\dots+v_j\equiv a\bmod m\}|=
\frac{1}{m}\sum_{j=1}^N\sum_{t=0}^{m-1}e_m\Big(t(v_1+\cdots+v_j-a)\Big).$$
This implies
\begin{equation}
\begin{aligned}
&\ex\left(\left(\Phi_{\text{rand}}(N;m,a)-\frac{1}{m}\right)^2\right)= \frac{1}{2^N}
\sum_{{\bf v}=(v_1,\dots,v_N)\in \{-1,1\}^N}\left(\frac{1}{N}\sum_{\substack{1\leq j\leq N\\ v_1+\cdots+v_j\equiv a \bmod m}} 1-\frac{1}{m}\right)^2\\
&= \frac{1}{2^N(mN)^2}
\sum_{{\bf v}=(v_1,\dots,v_N)\in \{-1,1\}^N}\left|\sum_{j=1}^N\sum_{t=0}^{m-1}e_m
\Big(t(v_1+\cdots+v_j-a)\Big)-N\right|^2.\\
\end{aligned}
\end{equation}
Now, expanding the summand on the RHS of (3.4) we derive
\begin{equation*}
\begin{aligned}
&\left|\sum_{j=1}^N\sum_{t=0}^{m-1}e_m
\Big(t(v_1+\cdots+v_j-a)\Big)-N\right|^2= \left|\sum_{j=1}^N\sum_{t=1}^{m-1}e_m
\Big(t(v_1+\cdots+v_j-a)\Big)\right|^2\\
&= \sum_{1\leq t_1,t_2\leq m-1}e_m\big(a(t_2-t_1)\big)\sum_{1\leq j_1,j_2\leq N}e_m\Big(t_1(v_1+\cdots+v_{j_1})-t_2(v_1+\cdots +v_{j_2})\Big).
\end{aligned}
\end{equation*}
Hence, we infer from (2.1) that
\begin{equation}
\begin{aligned}
&\sum_{a=0}^{m-1}\left|\sum_{j=1}^N\sum_{t=0}^{m-1}e_m
\Big(t(v_1+\cdots+v_j-a)\Big)-N\right|^2\\
&= m\sum_{t=1}^{m-1}\sum_{1\leq j_1,j_2\leq N}e_m\Big(t\big((v_1+\cdots+v_{j_1})-(v_1+\cdots +v_{j_2})\big)\Big)\\
&=m^2N+m\sum_{t=1}^{m-1}\sum_{1\leq j_1<j_2\leq N}\Bigg(e_m\Big(t(v_{j_1+1}+\cdots+v_{j_2})\Big)
+e_m\Big(-t(v_{j_1+1}+\cdots+v_{j_2})\Big)\Bigg).
\end{aligned}
\end{equation}
Inserting this estimate into (3.4), and using that $\sum_{\alpha\in \{-1,1\}}e_m(\alpha t)=2\cos(2\pi t/m)$, we obtain
$$ \sum_{a=0}^{m-1}\ex\left(\left(\Phi_{\text{rand}}(N;m,a)-\frac{1}{m}\right)^2\right)
= \frac{1}{N}+ \frac{2}{mN^2}\sum_{t=1}^{m-1}\sum_{1\leq j_1<j_2\leq N} \cos\left(\frac{2\pi t}{m}\right)^{j_2-j_1}.$$
The result follows upon using Lemma 3.2 to bound the RHS of the last identity.
\end{proof}

In order to prove Theorem 2 we require an analogous result to Proposition 1 in the case of a random walk on the non-negative integers, where each step is $0$ or $1$ (rather than $-1$ or $1$). To this end, we take $\{\widetilde{X}_j\}_{j\geq 1}$ to be a sequence of independent random variables taking the values $0$ and $1$ with equal probability $1/2$, and define
$$ \widetilde{S}_k= \widetilde{X}_1+\cdots + \widetilde{X}_k,$$
and
$$\widetilde{\Phi}_{\text{rand}}(N;m,a)=\frac{1}{N}|\{1\leq j\leq N: \widetilde{S}_j\equiv a\bmod m\}|.$$
Using a similar approach to the proof of Proposition 1 we establish:
\begin{prop} Let $m\geq 2$ be a positive integer. Then, for all $N\geq m^2$ we have
 $$ \sum_{a=0}^{m-1}\ex\left(\left(\widetilde{\Phi}_{\textup{rand}}(N;m,a)-\frac{1}{m}\right)^2\right)
\ll \frac{m^2}{N}.$$
\end{prop}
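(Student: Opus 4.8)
The plan is to mimic the proof of Proposition 1 almost verbatim, replacing the two-point distribution on $\{-1,1\}$ by the two-point distribution on $\{0,1\}$ and tracking how the character sum identities change. First I would write
$$\widetilde{\Phi}_{\text{rand}}(N;m,a)=\frac{1}{N}\sum_{j=1}^N\widetilde{Y}_j,\qquad \widetilde{Y}_j=\begin{cases}1&\text{if }\widetilde{S}_j\equiv a\bmod m\\0&\text{otherwise,}\end{cases}$$
and for each ${\bf v}=(v_1,\dots,v_N)\in\{0,1\}^N$ use the orthogonality relation (2.1) to express $|\{1\le j\le N: v_1+\cdots+v_j\equiv a\bmod m\}|$ as $\frac1m\sum_{j=1}^N\sum_{t=0}^{m-1}e_m(t(v_1+\cdots+v_j-a))$. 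Averaging the square over the $2^N$ choices of ${\bf v}$ (each with weight $2^{-N}$) gives, exactly as in (3.4),
$$\ex\left(\left(\widetilde{\Phi}_{\text{rand}}(N;m,a)-\frac1m\right)^2\right)=\frac{1}{2^N(mN)^2}\sum_{{\bf v}\in\{0,1\}^N}\left|\sum_{j=1}^N\sum_{t=1}^{m-1}e_m(t(v_1+\cdots+v_j-a))\right|^2.$$

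Next I would expand the modulus square, sum over $a$, and invoke (2.1) again to collapse the $t_1,t_2$ double sum to a single sum over $t$, obtaining the analogue of (3.6): the diagonal $j_1=j_2$ contributes $m^2N$, and the off-diagonal terms contribute $m\sum_{t=1}^{m-1}\sum_{1\le j_1<j_2\le N}\big(e_m(t(v_{j_1+1}+\cdots+v_{j_2}))+e_m(-t(v_{j_1+1}+\cdots+v_{j_2}))\big)$. The one genuinely new computation is the averaging step: since the $v_i$ are now $0/1$ rather than $\pm1$, we have $\frac12\sum_{\alpha\in\{0,1\}}e_m(\alpha t)=\frac{1+e_m(t)}{2}$, so averaging $e_m(t(v_{j_1+1}+\cdots+v_{j_2}))$ over the independent coordinates $v_{j_1+1},\dots,v_{j_2}$ yields $\big(\frac{1+e_m(t)}{2}\big)^{j_2-j_1}$ (and its conjugate for the other term). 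Thus
$$\sum_{a=0}^{m-1}\ex\left(\left(\widetilde{\Phi}_{\text{rand}}(N;m,a)-\frac1m\right)^2\right)=\frac1N+\frac{2}{mN^2}\,\mathrm{Re}\sum_{t=1}^{m-1}\sum_{1\le j_1<j_2\le N}\left(\frac{1+e_m(t)}{2}\right)^{j_2-j_1}.$$
Finally I would bound the right-hand side by $\frac1N+\frac{2}{mN^2}\sum_{t=1}^{m-1}\sum_{1\le j_1<j_2\le N}\left|\frac{1+e_m(t)}{2}\right|^{j_2-j_1}$ and apply the second statement of Lemma 3.2, which gives $O(m^3N)$ for that double sum; combined with the hypothesis $N\ge m^2$ this produces the claimed bound $O(m^2/N)$.

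I do not expect a serious obstacle: the structure is identical to Proposition 1 and the only substantive change is that $2\cos(2\pi t/m)=\sum_{\alpha\in\{-1,1\}}e_m(\alpha t)$ is replaced by $1+e_m(t)=\sum_{\alpha\in\{0,1\}}e_m(\alpha t)$, which is precisely why the paper states the two parallel assertions in Lemmas 3.1 and 3.2. The one point requiring a little care is that $\frac{1+e_m(t)}{2}$ is complex, so one must take real parts (or bound by absolute values) rather than dealing with a real cosine power directly; but since we immediately pass to absolute values before applying Lemma 3.2 this causes no difficulty. A minor subtlety worth a sentence is that when $m$ is even and $t=m/2$ one has $e_m(t)=-1$, so $\frac{1+e_m(t)}{2}=0$ and that term simply vanishes — consistent with (indeed a degenerate case of) the bound in Lemma 3.2.
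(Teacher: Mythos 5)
Your proposal is correct and follows essentially the same route as the paper: reduce to the identity (3.6), collapse the $t_1,t_2$ sum via orthogonality in $a$, average the off-diagonal terms over the independent $0/1$ coordinates to produce $\left(\frac{1+e_m(t)}{2}\right)^{j_2-j_1}$, and finish with the second statement of Lemma 3.2. The only cosmetic difference is that you handle the conjugate term by taking real parts, whereas the paper uses the substitution $t\mapsto m-t$ to merge the two terms; these are equivalent, and your remarks about the complex value of $\frac{1+e_m(t)}{2}$ and the degenerate case $t=m/2$ are both apt.
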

\begin{proof} We follow closely the proof of Proposition 1. First, a similar analysis used to derive (3.4) allows us to obtain
\begin{equation}
\begin{aligned}
&\ex\left(\left(\widetilde{\Phi}_{\text{rand}}(N;m,a)-\frac{1}{m}\right)^2\right)\\
&= \frac{1}{2^N(mN)^2}
\sum_{{\bf v}=(v_1,\dots,v_N)\in \{0,1\}^N}\left|\sum_{j=1}^N\sum_{t=0}^{m-1}e_m
\Big(t(v_1+\cdots+v_j-a)\Big)-N\right|^2.\\
\end{aligned}
\end{equation}
Hence, using the identity (3.5) in equation (3.6) we get
\begin{equation}
\begin{aligned}
 &\sum_{a=0}^{m-1}\ex\left(\left(\widetilde{\Phi}_{\textup{rand}}(N;m,a)-\frac{1}{m}\right)^2\right)\\
&= \frac{1}{N}+\frac{1}{mN^2}\sum_{t=1}^{m-1}\sum_{1\leq j_1<j_2\leq N}\left(\left(\frac{1+e_m(t)}{2}\right)^{j_2-j_1}+\left(\frac{1+e_m(-t)}{2}\right)^{j_2-j_1}\right)\\
&= \frac{1}{N}+\frac{2}{mN^2}\sum_{t=1}^{m-1}\sum_{1\leq j_1<j_2\leq N}\left(\frac{1+e_m(t)}{2}\right)^{j_2-j_1},
\end{aligned}
\end{equation}
upon noting that
$$\sum_{t=1}^{m-1}\left(\frac{1+e_m(t)}{2}\right)^d=\sum_{r=1}^{m-1}\left(\frac{1+e_m(-r)}{2}\right)^d,$$
by making the simple change of variables $r=m-t$.
Appealing to Lemma 3.2 completes the proof.

\end{proof}

%%%%%%%%%%%%%%%%%%%%%%%%%%%%%%%%%%%%%%%%%%%%%%%%%%%%%%%%%%%%%%%%%%%%%

\section{Character sums with polynomials: proof of Theorems 1 and 2}

We begin by proving the following key proposition which establishes the required link with random walks. Let $p$ be a large prime number and $F(X)\in \mathbb{F}_p(X)$ be a square-free polynomial of degree $d_F\geq 1$ in $\mathbb{F}_p(X)$. Moreover, let $L\leq (\log p)/\log (4d_F)$ be a positive integer, and put $N=[p/L]-1.$  Furthermore, for any ${\bf v}=(v_1,\dots, v_L)\in \{-1,1\}^L$ we define
\begin{equation}
D_{p,F}({\bf v }, L)=\{0\leq s\leq N : \chi_p(F(sL+j))=v_j \text{ for all } 1\leq j\leq L\}.
\end{equation}
\begin{prop} Let $p$, $L$, and $F(X)$ be as above. Then for any ${\bf v}=(v_1,\dots, v_L)\in \{-1,1\}^L$ we have
$$|D_{p,F}({\bf v }, L)|= \frac{p}{2^LL}\Big(1+ O_{d_F}\left(p^{-1/10}\right)\Big).$$
\end{prop}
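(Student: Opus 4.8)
The plan is to detect the condition ``$\chi_p(F(sL+j)) = v_j$ for all $1 \le j \le L$'' by orthogonality and reduce the count to a character sum that can be attacked by the Weil bound (Lemma 2.1). Writing $\mathbf{1}[\chi_p(F(sL+j)) = v_j]$ requires a small subtlety because $F(sL+j)$ could vanish or fail to be a unit; but away from the $O_{d_F}(1)$ bad values of $s$, we have $\mathbf{1}[\chi_p(F(sL+j)) = v_j] = \tfrac12\big(1 + v_j \chi_p(F(sL+j))\big)$. So up to an error of size $O_{d_F}(N/p \cdot \text{something negligible})$ — really just $O_{d_F}(1)$ in the count — we get
\begin{equation*}
|D_{p,F}(\mathbf{v},L)| = \frac{1}{2^L}\sum_{s=0}^{N} \prod_{j=1}^{L}\big(1 + v_j\,\chi_p(F(sL+j))\big) + O_{d_F}(1).
\end{equation*}
Expanding the product over subsets $T \subseteq \{1,\dots,L\}$ gives a main term (from $T = \emptyset$) of $\tfrac{N+1}{2^L} = \tfrac{p}{2^L L}(1 + O(L/p))$, plus $2^L - 1$ error terms, the $T$-term being $\tfrac{1}{2^L}\big(\prod_{j\in T} v_j\big)\sum_{s=0}^{N}\chi_p\!\big(\prod_{j\in T} F(sL+j)\big)$.

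The heart of the argument is to bound each of these inner sums $\sum_{s} \chi_p(G_T(s))$ where $G_T(X) = \prod_{j \in T} F(LX + j)$. First I would observe that since the shifts $j \in T$ are distinct elements of $\mathbb{F}_p$ (here we need $L < p$, which holds since $L \ll \log p$) and $|T| \le L \le (\log p)/\log(4d_F)$, Lemma 2.3 (applied with $a = L$, which is nonzero mod $p$, and the $b_j = j$ for $j \in T$) tells us that $G_T(X)$ is not a square in $\mathbb{F}_p(X)$. Hence $G_T$ has a square-free part that is nontrivial, and one can write $\chi_p(G_T(s)) = \chi_p(\mathrm{sqfree}(G_T)(s))$ for all $s$ with $G_T(s) \ne 0$; the square-free part has degree at most $d_F |T| \le d_F L$. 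Now Lemma 2.1 (with $P_1 = \mathrm{sqfree}(G_T)$, $P_2 = 0$, and $I$ the interval $\{0,\dots,N\}$, or rather mapping $s \mapsto Ls+j$ requires a change of variable — better to keep $G_T$ as a polynomial in $s$ directly, since Lemma 2.1 allows arbitrary square-free $P_1$) gives
\begin{equation*}
\Big|\sum_{s \in I}\chi_p\big(G_T(s)\big)\Big| \ll_{d_F} L\sqrt{p}\log p,
\end{equation*}
with an extra $O(d_F L)$ for the $s$ where $G_T(s) = 0$.

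Summing over the $2^L - 1 \le 2^L$ nonempty subsets $T$, the total error contributed to $|D_{p,F}(\mathbf{v},L)|$ is $\ll_{d_F} \tfrac{1}{2^L}\cdot 2^L \cdot L\sqrt{p}\log p = L\sqrt{p}\log p$. Comparing with the main term $\tfrac{p}{2^L L}$: we need $2^L L^2 \sqrt{p}\log p \ll_{d_F} p^{9/10}$, i.e. $2^L \ll p^{2/5}/(L^2 \log p)$. Since $L \le (\log p)/\log(4d_F)$ gives $2^L \le p^{(\log 2)/\log(4 d_F)} \le p^{1/2}$ — hmm, that's not quite enough on its own; but note $\log(4 d_F) \ge \log 8 > \log 2 \cdot 3$ when... actually $4 d_F \ge 8$ since $d_F\ge 1$, wait $d_F\ge 1$ gives $4d_F\ge 4$, so $2^L \le (4d_F)^{L} \le p$ only in the worst case $d_F=1$ where $2^L\le 4^L \le p$... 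I would instead simply note that the relative error is $\ll_{d_F} \tfrac{2^L L^2 \sqrt p \log p}{p} = 2^L L^2 p^{-1/2}\log p$, and since $2^L \le (4 d_F)^L \le p^{1/1} $ is too crude — the correct bookkeeping uses $2^L \le (4d_F)^{L \log 2/\log(4d_F)}$... The clean statement the authors surely intend: $2^L \le p^{\log 2/\log 4} = p^{1/2}$ when $d_F = 1$, and smaller otherwise, but combined with the $\sqrt p$ this only gives $O(1)$ relative error, not $p^{-1/10}$. So the real content must be that $L$ is genuinely smaller, or the exponent $1/10$ absorbs a more careful count; I expect the main obstacle is exactly this exponent arithmetic — verifying that $2^L L^2 \log p \ll_{d_F} p^{2/5}$ holds in the stated range $L \le (\log p)/\log(4 d_F)$, which forces one to track the base-$(4d_F)$ nature of the bound on $2^L$ carefully and use that $\log 2 / \log(4 d_F) < 1/2$ strictly, with room to spare, so $2^L \le p^{1/2 - \delta}$ for some $\delta = \delta(d_F) > 0$; then for $p$ large the $p^{-\delta}$ beats the $L^2 \log p$ factor and one gets a power saving, which is then stated uniformly as $p^{-1/10}$ (valid once $p$ is large enough, the implied constant depending on $d_F$). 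Everything else — the orthogonality expansion, the application of Lemma 2.3 to rule out squares, and the application of Lemma 2.1 — is routine.
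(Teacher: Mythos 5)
Your proposal reproduces the paper's proof essentially step for step: the same indicator $\frac{1}{2^L}\prod_{j}\bigl(1+v_j\chi_p(F(sL+j))\bigr)$ with an $O_{d_F}(1)$ correction for the $s$ where some $F(sL+j)$ vanishes, the same expansion over subsets with main term $N/2^L$, Lemma 2.3 to rule out squares, and Lemma 2.1 to bound each incomplete character sum by $\ll_{d_F} L\sqrt{p}\log p$, arriving at $|D_{p,F}(\mathbf{v},L)|=\frac{p}{2^LL}+O_{d_F}(L\sqrt{p}\log p)$, which is exactly the paper's (4.4). On one point you are more careful than the paper: $\prod_{j\in T}F(LX+j)$ need not be square-free (distinct shifts of $F$ can share a root), so Lemma 2.1 does not apply to it verbatim; passing to the square-free kernel, which is nonconstant because Lemma 2.2 actually produces a simple root, is the right fix, and the paper omits this remark.

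The step at which you stall is the genuinely delicate one, and your attempted resolution is incorrect. You claim $\log 2/\log(4d_F)<1/2$ strictly, ``with room to spare,'' but for $d_F=1$ one has $\log 2/\log 4=1/2$ exactly, so there is no $\delta(d_F)>0$ of slack: taking $L=\lfloor(\log p)/\log 4\rfloor$ gives $2^L\asymp p^{1/2}$, the main term is $p/(2^LL)\asymp p^{1/2}/L$, and the error term $L\sqrt{p}\log p$ exceeds it by a factor $\asymp(\log p)^3$. Thus for $d_F=1$ with $L$ at the top of the stated range, the estimate $\frac{p}{2^LL}+O_{d_F}(L\sqrt{p}\log p)$ does not yield the claimed relative error $p^{-1/10}$, nor even $o(1)$. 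You should know that the paper's own proof has precisely the same gap: it stops at (4.4) and declares the proof complete. For $d_F\geq 2$ everything is fine, since then $2^L\leq p^{\log 2/\log 8}=p^{1/3}$ and $2^LL^2\log p\ll p^{2/5}$ holds comfortably. The honest repair in the remaining case is to shrink the admissible range of $L$ by a constant factor (for instance $L\leq(\log p)/(4\log(4d_F))$), which costs only implied constants in the deduction of Theorem 1; as written, neither your argument nor the paper's establishes the proposition in the extreme case $d_F=1$, $L\approx(\log p)/\log 4$.
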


\begin{proof}
Let $S$ be the set of non-negative integers $0\leq s\leq N$ such that $F(sL+j)\neq 0$ for all $1\leq j\leq L.$ Then $|S|= N+O_{d_F}(1)$. Moreover, note that for $s\in S$ we have
\begin{equation} \frac{1}{2^L}\prod_{j=1}^L\left(1+v_j\chi_p(F(sL+j))\right)=\begin{cases} 1& \text{ if } s\in D_{p,F}({\bf v }, L),\\ 0 & \text{ otherwise}.  \end{cases}
\end{equation}
This yields
$$ |D_{p,F}({\bf v }, L)|= \frac{1}{2^L}\sum_{s=0}^N\prod_{j=1}^L\left(1+v_j\chi_p(F(sL+j))\right)+O_{d_F}(1).$$
Expanding the product on the RHS of the previous estimate, we find that $|D_{p,F}({\bf v }, L)|$ equals
\begin{equation}
\begin{aligned}
&\frac{1}{2^L}\sum_{s=0}^N\left(1+ \sum_{l=1}^L\sum_{1\leq i_1<i_2<\dots<i_l\leq L}v_{i_1}\cdots v_{i_l}\chi_p\big(F(sL+i_1)\cdots F(sL+i_l)\big)\right)+O_{d_F}(1).\\
&= \frac{N}{2^L} +  \frac{1}{2^L}\sum_{l=1}^L\sum_{1\leq i_1<\dots<i_l\leq L}v_{i_1}\cdots v_{i_l}\sum_{s=0}^N\chi_p\big(F(sL+i_1)\cdots F(sL+i_l)\big)+O_{d_F}(1).\\
\end{aligned}
\end{equation}
Since $F(X)$ is a square-free polynomial, then it follows from Lemma 2.3 that the polynomial $H_{i_1,\dots,i_l}(X)=F(LX+i_1)\cdots F(LX+i_l)$ is not a square in $\mathbb{F}_p(X)$.
Therefore, using Lemma 2.1 with $P_1(X)=H_{i_1,\dots,i_l}(X)$,  $P_2(X)=0$ and $I=\{0,\dots, N\}$, we obtain
$$ \left|\sum_{s=0}^N\chi_p\big(F(sL+i_1)\cdots F(sL+i_l)\big)\right|\leq 2d_FL\sqrt{p}\log p.$$
Inserting this bound in (4.3)  we get
\begin{equation}
|D_{p,F}({\bf v }, L)|= \frac{p}{2^LL}+ O_{d_F}\left(L\sqrt{p}\log p\right),
\end{equation}
which completes the proof.
\end{proof}

\begin{proof}[Proof of Theorem 1]
Recall that
$$ \Phi_p(F;m,a)=\frac{1}{p}|\{1\leq k\leq p: S_p(F,k)\equiv a \text{ mod } m\}|.$$
Let $L=[(\log p)/(\log(4d_F)],$ and put $N=[p/L]-1.$ Moreover, for any $0\leq s\leq N$, we define
$$M_L(s;m,a)= |\{1\leq l\leq L: S_p(F,sL+l)\equiv a \text{ mod } m\}|.$$
Then, note that
\begin{equation}
 \left|\Phi_p(F;m,a)-\frac{1}{m}\right|\leq \frac{1}{p}\sum_{s=0}^N\left|M_L(s;m,a)-\frac{L}{m}\right|+O\left(\frac{L}{p}\right).
\end{equation}
To bound the sum on the RHS of (4.5), we use the Cauchy-Schwarz inequality which gives
$$ \left(\sum_{s=0}^N\left|M_L(s;m,a)-\frac{L}{m}\right|\right)^2\leq (N+1)\sum_{s=0}^N \left(M_L(s;m,a)-\frac{L}{m}\right)^2.$$
Hence, combining this estimate with (4.5), we deduce
\begin{equation}
\left(\Phi_p(F;m,a)-\frac{1}{m}\right)^2\ll \frac{N}{p^2}\sum_{s=0}^N \left(M_L(s;m,a)-\frac{L}{m}\right)^2 +\frac{L^2}{p^2}.
\end{equation}
On the other hand, since $S_p(sL+l)=S_p(sL)+ \sum_{j=1}^l \chi_p\big(F(sL+j)\big)$, then
\begin{equation}
\sum_{a=0}^{m-1}\left(M_L(s;m,a)-\frac{L}{m}\right)^2=
\sum_{b=0}^{m-1}\left(\Delta_L(s;m,b)-\frac{L}{m}\right)^2,
\end{equation}
 where
  $$ \Delta_L(s;m,b)= |\{1\leq l\leq L: \sum_{j=1}^l \chi_p\big(F(sL+j)\big)\equiv b \text{ mod } m\}|.$$
Therefore, upon combining (4.6) and (4.7) we obtain
\begin{equation}
\sum_{a=0}^{m-1}\left(\Phi_p(F;m,a)-\frac{1}{m}\right)^2 \ll \frac{N}{p^2}\sum_{a=0}^{m-1}\sum_{s=0}^N \left(\Delta_L(s;m,a)-\frac{L}{m}\right)^2 +\frac{mL^2}{p^2}.
\end{equation}
Now we evaluate the inner sum on the RHS of the previous inequality. Using (2.1) we get
\begin{equation}
\begin{aligned}
 \sum_{s=0}^N \left(\Delta_L(s;m,a)-\frac{L}{m}\right)^2
 &= \frac{1}{m^2}\sum_{s=0}^N\left|\sum_{l=1}^L\sum_{t=0}^{m-1}e_m\Bigg(t\Big(\sum_{1\leq j\leq l}\chi_p\big(F(sL+j)\big)-a\Big)\Bigg)-L\right|^2\\
 &=\frac{1}{m^2}\sum_{s=0}^N\left|\sum_{l=1}^L\sum_{t=1}^{m-1}e_m\Bigg(t\Big(\sum_{1\leq j\leq l}\chi_p\big(F(sL+j)\big)-a\Big)\Bigg)\right|^2\\
&=\frac{1}{m^2}\sum_{{\bf v}\in \{-1,1\}^L} \left|\sum_{l=1}^L\sum_{t=1}^{m-1}e_m\Big(t\big(v_1+\cdots+v_l-a\big)\Big)\right|^2 D_{p,F}({\bf v},L).
\end{aligned}
\end{equation}
Hence, using Proposition 4.1 along with the identity (3.4) obtained in the random walk setting, we derive
\begin{equation*}
\begin{aligned}
&\sum_{s=0}^N \left(\Delta_L(s;m,a)-\frac{L}{m}\right)^2\\
&=\frac{p}{2^Lm^2L}\sum_{{\bf v}\in \{-1,1\}^L} \left|\sum_{l=1}^L\sum_{t=1}^{m-1}e_m\Big(t\big(v_1+\cdots+v_l-a\big)\Big)\right|^2 \left(1+O_{d_F}\left(p^{-1/10}\right)\right)\\
&=pL\ex\left(\left(\Phi_{\text{rand}}(L;m,a)-\frac{1}{m}\right)^2\right)
\left(1+O_{d_F}\left(p^{-1/10}\right)\right).\\
 \end{aligned}
\end{equation*}
Finally, combining this estimate with (4.8) we obtain
\begin{equation*}
\begin{aligned}
\sum_{a=0}^{m-1}\left(\Phi_p(F;m,a)-\frac{1}{m}\right)^2 &\ll_{d_F} \sum_{a=0}^{m-1}
\ex\left(\left(\Phi_{\text{rand}}(L;m,a)-\frac{1}{m}\right)^2\right)+\frac{m(\log p)^2}{p^2}\\
&\ll_{d_F} \frac{m^2}{\log p},
\end{aligned}
\end{equation*}
which follows from Proposition 1. This completes the proof.
\end{proof}
\begin{proof}[Proof of Theorem 2] We only prove the result for $R_p(F,k)$, since the proof for $N_p(F,k)$ is similar. Define
$$
\delta_F(j)= \begin{cases} 1 & \text{ if } \chi_p(F(j))=1\\ 0 & \text{ otherwise.} \end{cases}
$$
Then, note that
$$ R_p(F,k)=\sum_{j=1}^k \delta_F(j).$$
We follow closely the proof of Theorem 1. Let $L=[(\log p)/\log (4d_F)]$, and $N=[p/L]-1$. For any $0\leq s\leq N$ we define
  $$ \widetilde{\Delta}_L(s;m,b)= |\{1\leq l\leq L: \sum_{j=1}^l \delta_F(sL+j)\equiv b \bmod m\}|.$$
Then, similarly to the estimate (4.8) we obtain
\begin{equation}
\sum_{a=0}^{m-1}\left(\widetilde{\Phi}_p(F;m,a)-\frac{1}{m}\right)^2 \ll \frac{N}{p^2}\sum_{a=0}^{m-1}\sum_{s=0}^N \left(\widetilde{\Delta}_L(s;m,a)-\frac{L}{m}\right)^2 +\frac{m(\log p)^2}{p^2}.
\end{equation}
Moreover, an analogous approach which leads to the identity (4.9) also gives
$$ \sum_{s=0}^N \left(\widetilde{\Delta}_F(s;m,a)-\frac{L}{m}\right)^2= \frac{1}{m^2}\sum_{{\bf v}\in \{0,1\}^L} \left|\sum_{l=1}^L\sum_{t=1}^{m-1}e_m\Big(t\big(v_1+\cdots+v_l-a\big)\Big)\right|^2
\sum_{\substack{0\leq s\leq N\\ \delta_F(sL+j)=v_j \\ \text{ for all } 1\leq j\leq L}} 1.$$
Remark that if $F$ does not vanish in the interval $[sL+1,sL+L]$ then
$$\delta_{F}(sL+j)= \frac{1+\chi_p\big(F(sL+j)\big)}{2},$$
for all $1\leq j\leq L$. Hence, writing $\widetilde{\bf v}= (\widetilde{v}_1, \dots, \widetilde{v}_L)$ with $\widetilde{v}_j= 2v_j-1$, we deduce
$$\sum_{\substack{0\leq s\leq N\\ \delta_F(sL+j)=v_j\\ \text{ for all } 1\leq j\leq L}} 1= |D_p(\widetilde{\bf v }, L,F)|+O_{d_F}(1)=\frac{p}{2^LL}\Big(1+ O_{d_F}\left(p^{-1/10}\right)\Big) ,$$
which follows from Proposition 4.1.
Thus, appealing to the identity (3.6) obtained in the random walk setting, we derive
$$
\sum_{s=0}^N \left(\widetilde{\Delta}_F(s;m,a)-\frac{L}{m}\right)^2
= pL\ex\left(\left(\widetilde{\Phi}_{\text{rand}}(L;m,a)-\frac{1}{m}\right)^2\right)
\left(1+O_{d_F}\left(p^{-1/10}\right)\right).
$$
Therefore, inserting this estimate in (4.10) and using Proposition 3.3 we obtain
\begin{equation*}
\begin{aligned}
\sum_{a=0}^{m-1}\left(\widetilde{\Phi}_p(F;m,a)-\frac{1}{m}\right)^2 &\ll_{d_F} \sum_{a=0}^{m-1}\ex\left(\left(\widetilde{\Phi}_{\text{rand}}(L;m,a)-\frac{1}{m}\right)^2\right)
 +\frac{m(\log p)^2}{p^2}\\
&\ll_{d_F} \frac{m^2}{\log p},
\end{aligned}
\end{equation*}
as desired.
\end{proof}

%%%%%%%%%%%%%%%%%%%%%%%%%%%%%%%%%%%%%%%%%%%%%%%%%%%%%%%%%%%%%%%%%%%%%

\section{Character sums of fixed length: Proof of Theorem 3}

We shall derive Theorem 3 from the following proposition

\begin{prop} Fix $A\geq 1$. Let $N$ be large, and $k\leq A(\log_2N)/(\log_3N).$ Then for any ${\bf v}=(v_1,\dots,v_k)\in \{-1,1\}^k$ we have
$$ \frac{1}{\pi(N)}|\{p\leq N:  \chi_p(q_j)=v_j \textup{ for all } 1\leq j\leq k\}|= \frac{1}{2^k}\left(1+ O_A\left(\frac{1}{\log^AN}\right)\right).$$
\end{prop}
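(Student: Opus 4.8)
The plan is to detect the joint condition $\chi_p(q_j)=v_j$ for $1\le j\le k$ by the usual product of local indicators and to expand, reducing the count to a main term plus a sum of character sums $\sum_{p\le N}\chi_p(q_{i_1}\cdots q_{i_l})$ over subsets $\{i_1<\dots<i_l\}\subseteq\{1,\dots,k\}$. First I would note that for each fixed $j$ and each prime $p$ not dividing $q_j$ we have
$$\frac{1}{2}\bigl(1+v_j\chi_p(q_j)\bigr)=\begin{cases}1&\text{if }\chi_p(q_j)=v_j,\\0&\text{otherwise},\end{cases}$$
so that, writing $Q=q_1\cdots q_k$ and discarding the $O_k(1)=O_A(1)$ primes $p\mid Q$,
$$|\{p\le N:\chi_p(q_j)=v_j\ \forall j\}|=\frac{1}{2^k}\sum_{\substack{p\le N\\ p\nmid Q}}\prod_{j=1}^k\bigl(1+v_j\chi_p(q_j)\bigr)+O_A(1).$$
Expanding the product over all subsets $S\subseteq\{1,\dots,k\}$ and using multiplicativity of $\chi_p$, the term $S=\varnothing$ contributes $\pi(N)/2^k$ up to $O_A(1)$, and the statement reduces to showing that for every nonempty $S$,
$$\sum_{\substack{p\le N\\ p\nmid Q}}\chi_p\Bigl(\prod_{j\in S}q_j\Bigr)=\sum_{p\le N}\left(\frac{n_S}{p}\right)\ll_A\frac{\pi(N)}{2^k\log^A N},$$
where $n_S=\prod_{j\in S}q_j$ is a squarefree integer (a product of distinct primes, hence $>1$ and never a perfect square), and where I have harmlessly re-expressed $\chi_p(n_S)=\left(\frac{n_S}{p}\right)$ via quadratic reciprocity so that the inner object is a fixed non-principal real Dirichlet character $\left(\frac{n_S^\ast}{\cdot}\right)$ modulo (essentially) $4n_S$, independent of $p$.

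The key input is then a Siegel–Walfisz / Page-type estimate: for a non-principal real character $\psi$ to modulus $q$, one has $\sum_{p\le N}\psi(p)\ll_B N\exp(-c\sqrt{\log N})$ as long as $q\le(\log N)^{B}$, uniformly in such $q$, by the zero-free region for $L(s,\psi)$ together with the effective handling of a possible Siegel zero (the classical prime number theorem for arithmetic progressions / for $L$-functions). To apply this I must control the modulus: $n_S\le q_1q_2\cdots q_k$, and by the prime number theorem $q_j\ll j\log j$, so $n_S\le Q\le\exp\bigl(\sum_{j\le k}\log q_j\bigr)=\exp\bigl((1+o(1))k\log k\bigr)$. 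With $k\le A(\log_2N)/(\log_3N)$ this gives $\log n_S\ll k\log k\ll A\log_2N=A\log\log N$, so $n_S\le(\log N)^{O(A)}$, comfortably within the Siegel–Walfisz range. Hence each of the $2^k-1$ nonempty subset sums is $\ll_A N\exp(-c\sqrt{\log N})$, which is $\ll_A N/\log^{A+1}N$, say. Summing over the at most $2^k\le\exp(O_A(\log\log N))=(\log N)^{O_A(1)}$ subsets still leaves a bound $\ll_A N/\log^{A+1}N=\ll_A \pi(N)/\log^{A}N$; dividing through by $\pi(N)$ and $2^k$ (and absorbing the $O_A(1)$ error, which after division is $O_A(1/N)$) yields exactly the claimed asymptotic. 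The threshold $k\le A(\log_2N)/(\log_3N)$ is precisely what makes $2^k$ and the modulus $n_S$ both only polylogarithmic in $N$, which is the balance point that makes the argument work.

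The main obstacle is the uniformity in the modulus, i.e. the possible Siegel zero: the bound $\sum_{p\le N}\psi(p)\ll N\exp(-c\sqrt{\log N})$ is only ineffective-in-the-constant unless one invokes Siegel's theorem, and for a genuinely uniform treatment over the growing family of moduli $n_S$ one should argue as in the Siegel–Walfisz theorem — fix the exponent, use Siegel's bound $L(1,\psi)\gg_\varepsilon q^{-\varepsilon}$ to push any exceptional zero away, and note the constants depend only on $A$. Everything else (the binomial expansion, reciprocity bookkeeping to pass from $\chi_p(n_S)$ to a fixed character in $p$, the estimate $q_k\ll k\log k$, and the count $2^k=(\log N)^{O_A(1)}$) is routine. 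Finally, the remark that GRH extends the range to $k\ll(\log N)/(\log_2N)$ follows because under GRH one has $\sum_{p\le N}\psi(p)\ll\sqrt N\,(\log qN)^2$, which beats $\pi(N)/(2^k\log^AN)$ as long as $2^k\le N^{1/2-\varepsilon}$ and $\log n_S\ll\log N$, i.e. precisely when $k\log k\ll\log N$.
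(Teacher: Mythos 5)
Your proposal is correct and follows essentially the same route as the paper: expand the indicator $\frac{1}{2^k}\prod_j(1+v_j\chi_p(q_j))$, bound the moduli $n_S\le\prod_{j\le k}q_j=e^{(1+o(1))k\log k}\le(\log N)^{O_A(1)}$ via the prime number theorem, convert $\left(\frac{n_S}{p}\right)$ by quadratic reciprocity into a fixed non-principal real character of modulus $n_S$ or $4n_S$, and apply Siegel--Walfisz. Your added remarks on the Siegel-zero/ineffectivity issue and on the GRH range are accurate but not needed for the statement as given.
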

\begin{proof} If $\log N\leq p\leq N$ then
$$ \frac{1}{2^k}\prod_{j=1}^k\left(1+v_j\chi_p(q_j)\right)=
\begin{cases} 1 & \textup{ if } \chi_p(q_j)=v_j \textup{ for all } 1\leq j\leq k,\\
0 & \textup{ otherwise}.
\end{cases}
$$
Therefore we deduce that the number of primes $p\leq N$ such that   $\chi_p(q_j)=v_j$ for all $1\leq j\leq k$, equals
\begin{equation}
 \begin{aligned}
&=
\frac{1}{2^k}\sum_{p\leq N}\prod_{j=1}^k\left(1+v_j\chi_p(q_j)\right) + O(\log N)\\
&= \frac{1}{2^k} \sum_{p\leq N}\left(1+\sum_{l=1}^k\sum_{1\leq i_1<\cdots<i_l\leq k}v_{i_1}\cdots v_{i_l}\chi_p(q_{i_1}\cdots q_{i_{l}})\right)+ O(\log N)\\
&= \frac{\pi(N)}{2^k} +\frac{1}{2^k}\sum_{l=1}^k\sum_{1\leq i_1<\cdots<i_l\leq k}v_{i_1}\cdots v_{i_l}\sum_{ p\leq N}\left(\frac{q_{i_1}\cdots q_{i_l}}{p}\right)+ O(\log N).
\end{aligned}
\end{equation}
For $1\leq i_1<\cdots<i_l\leq k$ we let $Q_{i_1,\dots,i_l}= q_{i_1}\dots q_{i_l}$. Then it follows from the prime number theorem that
$Q_{i_1,\dots,i_l}\leq \prod_{j\leq k}q_j=e^{k\log k(1+o(1))}\leq (\log N)^{A+o(1)}.$
On the other hand, quadratic reciprocity implies that $\left(\frac{Q_{i_1,\dots,i_l}}{\cdot}\right)$ is a character of modulus $Q_{i_1,\dots,i_l}$ or $4Q_{i_1,\dots,i_l}$. Therefore, appealing to the Siegel-Walfisz Theorem (see Corollary 5.29 of Iwaniec-Kowalski \cite{IwKo}), we deduce
$$ \sum_{p\leq N} \left(\frac{Q_{i_1,\dots,i_l}}{p}\right)\ll_A (Q_{i_1,\dots,i_l})^{1/2}\frac{N}{\log^{2A} N}.$$
Inserting this estimate in (5.1) completes the proof.
\end{proof}

\begin{proof}[Proof of Theorem 3]
Using (2.1) we obtain
\begin{equation}
\begin{aligned}
 \Psi_N(k;m,a)&= \frac{1}{\pi(N)}|\{p\leq N: S_k(p)\equiv a\bmod m\}|.\\
 &= \frac{1}{m\pi(N)}\sum_{p\leq N}\sum_{t=0}^{m-1}e_m\big(t(S_k(p)-a)\big)\\
 &=  \frac{1}{m\pi(N)}\sum_{t=0}^{m-1}\sum_{{\bf v}\in \{-1,1\}^{k}}e_m\Big(t(v_1+\cdots+v_k-a)\Big)
 \sum_{\substack{p\leq N\\ \chi_p(q_j)=v_j \text{ for } 1\leq j\leq k}}1 \\
\end{aligned}
\end{equation}
Thus, appealing to Proposition 5.1 along with the identity (3.3) obtained in the random walk setting we derive
\begin{equation*}
\begin{aligned}
 \Psi_N(k;m,a)&=\frac{1}{2^km}\sum_{t=0}^{m-1}\sum_{{\bf v}\in \{-1,1\}^{k}}e_m\Big(t(v_1+\cdots+v_k-a)\Big)
+ O_A\left(\frac{1}{\log^AN}\right)\\
&= \Psi_{\text{rand}}(k;m,a)+ O_A\left(\frac{1}{\log^AN}\right),\\
\end{aligned}
\end{equation*}
which completes the proof.
\end{proof}

%%%%%%%%%%%%%%%%%%%%%%%%%%%%%%%%%%%%%%%%%%

\end{document}